  \definecolor{verydarkblue}{rgb}{0,0,0.5}
  \newtheorem{theorem}{Theorem}[section]		
  \newtheorem{lemma}[theorem]{Lemma}
  \newtheorem{prop}[theorem]{Proposition}
  \newtheorem{rmk}[theorem]{Remark}				
  \newtheorem*{defi*}{Definition}			
  \newtheorem*{bei*}{Example}
  \newtheorem*{sat*}{Theorem}				
  \newtheorem*{kor*}{Corollary}
  \newtheorem*{quest*}{Question}	
  \newtheorem{fact}{Fact}	
  \newtheorem*{claim}{Claim}	
  \let\ssection=\section
  \renewcommand{\section}{\setcounter{equation}{0}\ssection}
  \newtheorem*{namedtheorem}{\theoremname}
  \newcommand{\theoremname}{testing}
  \newenvironment{named}[1]{\renewcommand{\theoremname}{#1}\begin{namedtheorem}}{\end{namedtheorem}}
  \theoremstyle{remark}
  \newtheorem*{bem}{Remark}
  \newtheorem*{namedtheoremr}{\theoremnamer}
  \newcommand{\theoremnamer}{testing}
  \newcommand{\BR}{\mathbb R}			
  			\newcommand{\BZ}{\mathbb Z}
  \newcommand{\RP}{\mathrm P}
  \newcommand{\CC}{\mathcal C}		\newcommand{\calD}{\mathcal D}
  		\newcommand{\CF}{\mathcal F}
  \newcommand{\CG}{\mathcal G}		
  \newcommand{\CI}{\mathcal I}		
  		\newcommand{\CL}{\mathcal L}
  \newcommand{\CM}{\mathcal M}		\newcommand{\CN}{\mathcal N}
  		\newcommand{\CR}{\mathcal R}
  		\newcommand{\CZ}{\mathcal Z}
  \newcommand{\actson}{\curvearrowright}
  \newcommand{\D}{\partial}
  \DeclareMathOperator{\Diff}{Diff}	
  \DeclareMathOperator{\SL}{SL}		
  \DeclareMathOperator{\Map}{Map}
  \DeclareMathOperator{\rel}{rel}
  \newcommand{\comment}[1]{}
  \DeclareMathOperator{\const}{const}
  \DeclareMathOperator{\Lip}{Lip}
  \DeclareMathOperator{\Thu}{Thu}
\begin{document}


  \title[Genericity of pseudo-Anosovs]{Genericity of pseudo-Anosov mapping
  classes,\\ when seen as mapping classes}
  \author{Viveka Erlandsson}
  \address{School of Mathematics, University of Bristol \\ Bristol BS8 1UG, UK {\rm and}  \newline ${ }$ \hspace{0.2cm} Department of Mathematics and Statistics, UiT The Arctic University of Norway}
  \email{v.erlandsson@bristol.ac.uk}

  \author{Juan Souto}
  \address{UNIV RENNES\\ CNRS\\ IRMAR - UMR 6625\\ F-35000 RENNES\\ FRANCE}
  \email{juan.souto@univ-rennes1.fr}

  \author{Jing Tao}
  \address{Department of Mathematics, University of Oklahoma \\ 601 Elm
  Avenue Room 423 \\ Norman, OK 73069}
  \email{jing@ou.edu}

  \begin{abstract}
    
    We prove that pseudo-Anosov mapping classes are generic with respect to
    certain notions of genericity reflecting that we are dealing with
    mapping classes. More precisely, we consider a number of functions
    $\rho$ on the mapping class group, and show that the proportion of
    pseudo-Anosov mapping classes with $\rho$--value at most $R$ tends to
    $1$ as $R$ tends to infinity. The functions we consider include
    measuring the complexity of mapping classes using quasi-conformal
    distortion or Lipschitz distortion. We present a uniform approach to
    this problem using geodesic currents.  

  \end{abstract}
  
  \subjclass[2010]{Primary 37E30; Secondary 30F60, 57M50.}
  \keywords{Mapping class groups, geodesic currents, Teichm\"uller spaces}
  \thanks{The first author was partially supported by EPSRC grant
  EP/T015926/1.\\ \indent The third author was partially supported by NSF
  DMS-1611758 and DMS-1651963. }

  \maketitle

\section{Introduction}

  Throughout this paper let $\Sigma$ be a complete orientable hyperbolic
  surface of finite area, with genus $g$ and $r$ punctures. We exclude the
  case of $(g,r)\neq(0,3)$ so that the mapping class group $\Map(\Sigma)$
  of $\Sigma$ is infinite.  

  Thurston's classification asserts that elements in $\Map(\Sigma)$ fall
  into three categories: finite order, reducible, and pseudo-Anosov.
  However, it seems that from any reasonable point of view most elements
  are pseudo-Anosov. For example, Maher \cite{Maher1} proved that, with few
  assumptions, random walks on the mapping class group give rise to
  pseudo-Anosov elements with asymptotic probability one. This result was
  later enhanced and generalized by Maher himself and others  \cite{Maher3,
  Maher-Tiozzo, Rivin, Sisto, Taylor-Tiozo}. 

  We will however care about another notion of genericity: if
  $\rho:\Map(\Sigma)\to\BR_{\ge 0}$ is a proper positive function, then we
  say that a set $X\subset\Map(\Sigma)$ is {\em generic} with respect to
  $\rho$, or {\em $\rho$-generic} for short, if we have
  $$\lim_{R\to\infty}\frac{\vert B^\rho(R)\cap X\vert}{\vert
  B^\rho(R)\vert}=1$$ where $B^\rho(R)=\{\phi\in\Map(\Sigma)\text{ with
  }\rho(\phi)\le R\}$. Here properness of $\rho$ just means that
  $B^\rho(R)$ is a finite set for all $R$. A {\em negligible} set is one
  whose complement is generic.

  Maybe the first function that comes to mind is the word length with
  respect to a finite generating set $\CG$ of $\Map(\Sigma)$, and Cumplido
  and Wiest \cite{Cumplido-Wiest} proved that indeed the set of
  pseudo-Anosov elements is not negligible in this sense. For braid groups
  equipped with the Garside's generating set, Caruso and Wiest
  \cite{Caruso-Wiest} showed that it is also generic. But beyond this case,
  genericity of pseudo-Anosov elements remain open for word lengths. 

  However, one can make the case that the word length, while being related
  to the group theory of the mapping class group, has little to do with the
  fact that the mapping class group consists of mapping classes. To
  illustrate this point identify $\SL_2\BZ$ with the mapping class group of
  the once punctured torus and note that the two matrices
  $$
  M=\left(
  \begin{array}{cc}
  5904283700961130691 & 4322235651404355330\\
  2161117825702177665 & 1582048049556775361
  \end{array}
  \right),\ 
  N=\left(
  \begin{array}{cc}
  1 & 99\\
  0 & 1
  \end{array}
  \right)$$ have the same word length, namely 99, with respect to the
  standard generating set of $\SL_2\BZ$. 
  Arguably, it would be more natural to say that $M$ is farther from the
  identity than $N$. Not only because the coefficients of $M$ are much
  larger than those of $N$ but, more importantly, because the map induced
  by $M$ on the torus distorts both the metric and conformal structure much
  more dramatically than the map induced by $N$. 

  Our goal is to prove that pseudo-Anosov mapping classes are
  $\rho$-generic with respect to a number of functions on $\Map(\Sigma)$
  measuring the complexity of mapping classes when seen as mapping classes.
  More precisely, given $f \in \Diff(\Sigma)$, denote by $K(f)$ the
  quasi-conformal distortion of $f$, and by $\Lip(f)$ the Lipschitz
  constant of $f$. Also denote by $\iota(\cdot,\cdot)$ the geometric
  intersection number between two multicurves. We show: 

  \begin{theorem}\label{thm-distances}
    The set of pseudo-Anosov mapping classes is generic with respect to any
    one of the functions:
    \begin{enumerate}
      \item $\rho_{K}(\phi)=\inf\{K(f)\text{ where }f\in\Diff(\Sigma)\text{
      represents }\phi\}$. 
      \item $\rho_{\Lip}(\phi)=\inf\{\Lip(f)\text{ where }f\in\Diff(\Sigma)\text{
      represents }\phi\}$.
      \item $\rho_{\sigma,\eta}(\phi)=\iota(\phi(\sigma),\eta)$, where
        $\sigma$ and $\eta$ are filling multicurves. 
    \end{enumerate}
  \end{theorem}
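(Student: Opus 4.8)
The plan is to realize all three functions as instances of a single phenomenon measured by geodesic currents, and then prove genericity once at that level of generality. The unifying observation is that each $\rho$ in the statement is, up to bounded multiplicative and additive error, comparable to a function of the form $\phi\mapsto\max_{\mu,\nu}\iota(\phi_*\mu,\nu)$ where $\mu,\nu$ range over a suitable finite collection of filling currents (for $\rho_{\sigma,\eta}$ this is immediate; for $\rho_K$ one uses that the quasi-conformal distortion of the optimal map is, by Teichm\"uller theory together with the fact that extremal length of a filling multicurve is bi-Lipschitz to the Thurston-type length functions, comparable to the Teichm\"uller displacement $d_{\Teich}(X,\phi X)$ for a fixed basepoint $X$, which in turn is coarsely $\frac12\log\big(\iota(\phi_*\mu_X,\nu_X)\big)$-type data coming from the horizontal/vertical foliations; for $\rho_{\Lip}$ one uses Thurston's formula $\log\rho_{\Lip}(\phi)=d_{\mathrm{Thu}}(X,\phi X)$ together with the sup-over-curves description of the Thurston metric). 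So the first step is a ``reduction'' lemma: there is a finite set of filling currents and constants $C\ge 1$, $c\ge 0$ such that each of $\log\rho_K$, $\log\rho_{\Lip}$, and (without the log) $\rho_{\sigma,\eta}$ is within $(C,c)$ of $\phi\mapsto\sup\{\tfrac12\log\iota(\phi(\mu),\nu)\}$ over that finite set. Consequently $B^\rho(R)$ is sandwiched between two sets of the form $\{\phi:\iota(\phi(\mu),\nu)\le T\text{ for all }\mu,\nu\}$, and it suffices to count lattice points of that intersection-number type.

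The second, and main, step is an equidistribution/counting argument for the sets $\{\phi\in\Map(\Sigma):\iota(\phi(\mu),\nu)\le T\}$. Here I would use the now-standard machinery (Mirzakhani's integration and the ergodicity of the mapping class group action on the space of measured geodesic currents, or equivalently the results of Erlandsson--Souto on counting with respect to general filling currents): for a fixed filling current $\nu$, the orbit points $\{\phi(\mu):\phi\in\Map(\Sigma)\}$ equidistribute (after rescaling by $T$) towards the Thurston measure on $\mathcal{ML}(\Sigma)$, so that $|\{\phi:\iota(\phi(\mu),\nu)\le T\}|\sim \kappa\cdot T^{6g-6+2r}$ for an explicit constant $\kappa=\kappa(\mu,\nu)$, and more refined, the measures $\frac{1}{T^{6g-6+2r}}\sum_{\phi:\iota(\phi(\mu),\nu)\le T}\delta_{\frac1T\phi(\mu)}$ converge weakly to (a constant times) Thurston measure restricted to the region $\{\lambda:\iota(\lambda,\nu)\le 1\}$. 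The upshot is that $B^\rho(R)$ has cardinality asymptotic to $\kappa_\rho\cdot e^{hR}$ (or $\kappa_\rho R^{6g-6+2r}$ in case (3)) for an explicit exponent and an explicit ``shape'' of the limiting set inside $\mathcal{ML}$, and — crucially — that the rescaled counting measures converge weakly to an explicit measure on $\mathcal{ML}(\Sigma)$ that has no atoms and is fully supported.

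The third step is to show that non-pseudo-Anosov classes are negligible for this count. A class $\phi$ is reducible or finite order iff $\phi$ fixes a multicurve or is periodic; in currents terms, the mapping class $\phi$ fails to be pseudo-Anosov precisely when a certain ``fixed simple closed curve'' obstruction holds, and one checks that the set of $\phi$ with $\iota(\phi(\mu),\nu)\le T$ that are \emph{not} pseudo-Anosov projects, after rescaling by $T$, into a \emph{closed, measure-zero} subset of the region $\{\lambda\in\mathcal{ML}:\iota(\lambda,\nu)\le 1\}$ — namely the closure of the set of projective classes of multicurves fixed by \emph{some} mapping class, together with the lower-dimensional strata of $\mathcal{ML}$ corresponding to non-filling laminations; this is a countable union of proper subvarieties and hence Thurston-null. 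Since the limiting counting measure is Thurston measure (up to constant) and assigns zero mass to that null set, weak convergence gives $|B^\rho(R)\setminus\{\text{pA}\}|=o(|B^\rho(R)|)$, which is exactly $\rho$-genericity of the pseudo-Anosov set. I expect the main obstacle to be step two in the precise form needed: one needs the counting asymptotics to hold not just for the ``ball'' $\{\iota(\phi(\mu),\nu)\le T\}$ for a single pair but uniformly enough across the finite family appearing in step one, and with error terms good enough to survive taking a sup of finitely many comparable functions and intersecting the corresponding sublevel sets; handling the genuine sup (rather than a single intersection number) and identifying the correct limiting ``shape'' region in $\mathcal{ML}$ is where the real work lies, and where the geodesic-current formalism of the earlier sections will be doing the heavy lifting.
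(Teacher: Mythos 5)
Your steps 1 and 2 match the paper's architecture: the paper also reduces $\rho_K$ and $\rho_{\Lip}$ to $\rho_{\sigma,\sigma}$ via bi-Lipschitz comparisons (its Facts 1 and 2; note your factor $\tfrac12\log\iota$ is calibrated for $\rho_K$, not $\rho_{\Lip}$ --- one has $\rho_{\Lip}\asymp\iota$ and $\rho_K\asymp\iota^2$), and it uses exactly the convergence of the rescaled orbit counting measures to a multiple of the Thurston measure, together with the $\liminf>0$ lower bound for the full orbit, to convert a measure statement into genericity. So far so good.

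The genuine gap is in your step 3, which is where the entire difficulty of the theorem lives. You assert that the non-pseudo-Anosov orbit points $\tfrac1T\phi(\gamma_0)$ accumulate projectively only in a closed Thurston-null subset of $\CM\CL(\Sigma)$ (fixed multicurves of mapping classes plus non-filling laminations), and then conclude by portmanteau. This premise is false: the reducing system of $\phi$ has no direct bearing on where $\phi(\gamma_0)$ lands. Already the Dehn twists alone defeat it: for a simple closed curve $\alpha$ one has $\tau_\alpha(\gamma_0)\approx \iota(\gamma_0,\alpha)\,\alpha+\gamma_0$, so as $\alpha$ ranges over all simple closed curves the projective limits of $\tau_\alpha(\gamma_0)$ sweep out the closure of the projective simple closed curves, i.e.\ all of $\RP\CM\CL(\Sigma)$. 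The accumulation set of $\CR\cdot\gamma_0$ is therefore not contained in any null set; what is small is its \emph{density}, and no support argument can see that. The paper's proof has to work around exactly this: Maher's theorem splits $\CR$ into the $k$-dense part $\calD_k$ (elements at bounded relative distance from finitely many centralizers), where a support argument does apply --- via Proposition \ref{kor silly kor} and the fact that uniquely ergodic laminations have full Thurston measure, any limit measure is carried by the projectively $\phi_0$-fixed laminations, a null set --- and the $k$-isolated part $\CI_k$, where the limit measure may a priori have full support and is instead killed by a completely different mechanism: the translates $\CI_k\phi$, $\phi\in\CZ$, are pairwise disjoint for $|\CZ|$ arbitrary (Lemma \ref{lem-equivariance} plus the choice of $k$), forcing $\sum_{\phi}\phi_*\mathfrak{m}\le C\,\mathfrak{m}_{\Thu}$, and then ergodicity (recurrence) of the $\Map(\Sigma)$-action on $(\CM\CL(\Sigma),\mathfrak{m}_{\Thu})$ forces the Radon--Nikodym derivative to vanish. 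Your proposal contains no substitute for either Maher's decomposition or the ergodicity step, so as written it does not prove the theorem.
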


  \begin{bem}
    Note that, although amazingly it is not formally stated in the paper, the
    claim for $\rho_K(\phi)$ in Theorem \ref{thm-distances} was obtained by
    Maher in \cite{Maher2}. Unfortunately, we were unaware of this fact until
    we finished writing our paper. Both the argument in \cite{Maher2} and
    ours have the same starting point, namely an earlier, again not formally
    stated, result from \cite{Maher1}. However, after that starting point,
    the arguments use different methods and techniques. We will return to
    this at the end of the introduction. 
  \end{bem}  
  
  We sketch now the proof of Theorem \ref{thm-distances}. We begin by
  addressing the reason why we are including $\rho_{\sigma,\eta}$ at all
  among the functions in Theorem \ref{thm-distances}. There are a few
  reasons. First, both quantities $\rho_K(\phi)$ and $\rho_{\Lip}(\phi)$
  can be estimated in terms of $\rho_{\sigma,\eta}$. Second, there is the
  maybe not very important observation that, after identifying $\SL_2\BZ$
  with the mapping class group of a punctured torus, the $\ell_1$-norm on
  $\SL_2\BZ$ agrees with $\rho_{\sigma,\sigma}$ where $\sigma$ is the union
  of the two simple curves representing the standard generators of
  homology. However, the main reason to consider $\rho_{\sigma,\eta}$ is
  that it is the more natural quantity from the point of view of proofs. 

  In fact, if we denote by $\CC(\Sigma)$ the space of geodesic currents on
  $\Sigma$ endowed with the weak-* topology, and consider multicurves as
  currents, then what we will actually prove is the following theorem:
  \begin{theorem}\label{thm-counting curves}
    Let $\CR \subset \Map(\Sigma)$ be the set of non-pseudo-Anosov mapping
    classes and let $\gamma_0\subset\Sigma$ be a filling multicurve. Then
    we have $$\lim_{L\to\infty}\frac{\vert\{\phi\in\CR\text{ with
    }F(\phi(\gamma_0))\le L\}\vert}{L^{6g-6+2r}}=0$$ for every continuous
    homogenous function $F:\CC(\Sigma)\to\BR_{\ge 0}$ which, for every
    compact $A\subset\Sigma$, is proper when restricted on the set
    $\CC_A(\Sigma)$ of currents supported by $A$.
  \end{theorem}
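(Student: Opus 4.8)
The plan is to deduce Theorem \ref{thm-counting curves} from the known asymptotic count of mapping classes $\phi$ with $F(\phi(\gamma_0)) \le L$, by showing that the non-pseudo-Anosov classes contribute a negligible fraction. Recall (this is the ``earlier, again not formally stated, result from \cite{Maher1}'' referenced above, combined with the curve-counting machinery of geodesic currents) that for a filling multicurve $\gamma_0$ and a function $F$ as in the statement, one has
$$
\lim_{L\to\infty}\frac{\vert\{\phi\in\Map(\Sigma)\text{ with }F(\phi(\gamma_0))\le L\}\vert}{L^{6g-6+2r}}=c
$$
for some constant $c>0$, the exponent $6g-6+2r$ being the dimension of $\Teich(\Sigma)$ (equivalently, of the space of measured laminations). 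So it suffices to prove that the numerator in Theorem \ref{thm-counting curves}, with $\CR$ in place of $\Map(\Sigma)$, is $o(L^{6g-6+2r})$.

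The key idea is to stratify $\CR$ by the topological type of the canonical reduction system, together with the finite-order case. Since a non-pseudo-Anosov $\phi$ either has finite order or preserves (up to isotopy) an essential multicurve, and since finite-order elements are controlled separately (they form finitely many conjugacy classes, and $F(\phi(\gamma_0))$ is bounded below on each conjugacy class only trivially, so one must instead observe that a finite-order $\phi$ with $F(\phi(\gamma_0))\le L$ still lies in a set that grows polynomially of lower degree --- in fact one bounds these by noting $\phi$ is determined by where it sends a fixed filling system, and the finite-order locus maps to a lower-dimensional subset of current space). For the reducible part: if $\phi(\delta)=\delta$ for some essential simple closed curve $\delta$, then $\iota(\phi(\gamma_0),\delta)=\iota(\gamma_0,\phi^{-1}(\delta))$; the point is that $\phi(\gamma_0)$, as a current, is ``pinched'' along $\delta$ in the sense that it has bounded intersection behaviour relative to $\delta$, forcing it to lie near a proper subvariety of the space of currents. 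Concretely, I would fix a finite collection of curves $\delta_1,\dots,\delta_N$ so that every isotopy class of essential simple closed curve is taken by some element of a fixed finite set into one of the $\delta_i$; then the reducible $\phi$ with $F(\phi(\gamma_0))\le L$ are covered by finitely many sets of the form $\{\phi : \phi \text{ preserves a curve in the } \Map\text{-orbit of }\delta_i,\ F(\phi(\gamma_0))\le L\}$, and reduce via the above identity to counting $\phi$ whose image current $\phi(\gamma_0)$ is constrained to the set $\{\mu \in \CC(\Sigma) : F(\mu)\le L,\ \iota(\mu,\delta_i) = \iota(\gamma_0,\delta_i')\text{ for some }\delta_i' \text{ in a finite set}\}$, i.e.\ a slice of a ball by finitely many real-analytic hypersurfaces.

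The main obstacle --- and the heart of the argument --- is to show that each such slice contains only $o(L^{6g-6+2r})$ lattice points of the form $\phi(\gamma_0)$, $\phi\in\Map(\Sigma)$. The clean way to do this is Lebesgue-measure–theoretic: the counting result for $\Map(\Sigma)$ itself comes (via Thurston measure / Mirzakhani-type equidistribution for currents) from the fact that the measures $\frac{1}{L^{6g-6+2r}}\sum_{\phi} \delta_{\frac 1L \phi(\gamma_0)}$ converge weak-* to an absolutely continuous (with respect to Thurston measure $\mu_{\mathrm{Th}}$) measure on the space of measured laminations. Given this, it suffices to check that the limiting fixed set $\{\lambda \in \mathcal{ML}(\Sigma) : \iota(\lambda,\delta_i)=0\}$ --- the laminations not filling relative to $\delta_i$ --- has Thurston measure zero; and indeed $\{\iota(\cdot,\delta_i)=0\}$ is a proper closed subset cut out by a piecewise-linear function, hence $\mu_{\mathrm{Th}}$-null, so by the Portmanteau theorem the mass of the rescaled counting measure near it tends to zero, which is exactly the $o(L^{6g-6+2r})$ estimate we need. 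The finite-order contribution is handled the same way, its limiting locus being the (measure-zero) set of laminations fixed by one of finitely many finite-order classes. Combining the reducible and finite-order estimates and dividing by $L^{6g-6+2r}$ gives the theorem.
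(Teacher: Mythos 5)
There is a genuine gap, and it sits exactly where you locate ``the heart of the argument.'' Your reduction of the reducible case rests on the claim that every essential simple closed curve is taken into one of finitely many fixed curves $\delta_1,\dots,\delta_N$ \emph{by an element of a fixed finite set} of mapping classes. That is false: a fixed finite set of mapping classes moves the infinitely many isotopy classes of simple closed curves into only finitely many isotopy classes. What is true is that there are finitely many $\Map(\Sigma)$--orbits of curves, but then the reducing curve of $\phi$ is $\psi(\delta_i)$ for a $\psi$ that varies with $\phi$ in an uncontrolled way. The identity $\iota(\phi(\gamma_0),\beta)=\iota(\gamma_0,\beta)$ for the reducing curve $\beta=\beta_\phi$ therefore does not confine $\frac1L\phi(\gamma_0)$ near the \emph{fixed} null set $\{\iota(\cdot,\delta_i)=0\}$. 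Passing to limits along a sequence $\phi_n$ and normalizing both $\phi_n(\gamma_0)$ and $\beta_{\phi_n}$, all you obtain is that the limit current $\lambda$ satisfies $\iota(\lambda,\mu)=0$ for \emph{some} measured lamination $\mu$ depending on the sequence --- a vacuous condition, since every $\lambda$ satisfies it with $\mu=\lambda$. So the Portmanteau step has no fixed measure-zero set to apply to, and the $o(L^{6g-6+2r})$ bound does not follow. The finite-order case has the same defect: you never identify a fixed $\mathfrak{m}_{\Thu}$--null set containing the accumulation points of $\frac1L\phi(\gamma_0)$ over finite-order $\phi$.

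This is precisely the difficulty the paper's proof is built to overcome, and it cannot be bypassed by stratifying $\CR$ by reduction systems. The paper instead invokes Maher's structure theorem: for each $k$, the non-$k$-isolated part $\calD_k$ of $\CR$ lies in finitely many sets $\CN_{\rel}(C(\phi_0),L)$, and for such sets Proposition \ref{kor silly kor} pins the (uniquely ergodic) accumulation points of $\frac1L\phi(\gamma_0)$ to the fixed-point set of the \emph{fixed} element $\phi_0$ --- a genuinely $\mathfrak{m}_{\Thu}$--null set, which is where your absolute-continuity argument then correctly applies. The remaining $k$-isolated points $\CI_k$, which your stratification does not see at all, are handled by a separate argument: their $k$-separation forces $\sum_{\phi\in\CZ}\phi_*\mathfrak{m}\le C\,\mathfrak{m}_{\Thu}$ for every finite $\CZ\subset\Map(\Sigma)$, and ergodicity of the $\Map(\Sigma)$--action on $(\CM\CL(\Sigma),\mathfrak{m}_{\Thu})$ then forces $\mathfrak{m}=0$. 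Your framework (rescaled counting measures, absolute continuity with respect to $\mathfrak{m}_{\Thu}$, comparison with the positive count over all of $\Map(\Sigma)$) matches the paper's, but without an input of the strength of Maher's theorem and without the ergodicity step the proof does not close.
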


  Recall that a function $F:\CC(\Sigma)\to\BR$ is {\em homogenous} if
  $F(t\cdot\lambda)=t\cdot F(\lambda)$ for every $t\ge 0$ and
  $\lambda\in\CC(\Sigma)$. Note also that for $\Sigma$ open, the properness
  condition we impose on $F$ is much weaker it being proper on
  $\CC(\Sigma)$. For example, if $\eta$ is a filling multicurve then
  $F(\cdot)=\iota(\cdot,\eta)$ is not proper on $\CC(\Sigma)$ but is proper
  on $\CC_A(\Sigma)$ for any $A$. Theorem \ref{thm-distances} follows when
  we apply Theorem \ref{thm-counting curves} to the corresponding functions
  combined with the fact, see \cite{ES-GAFA,Sapir}, that
  \begin{equation}\label{eq-maryam}
    \liminf_{L\to\infty}\frac{\vert\{\phi\in\Map(\Sigma)\text{ with
    }F(\phi(\gamma_0))\le L\}\vert}{L^{6g-6+2r}}>0
  \end{equation}
  for any $F$ as in Theorem \ref{thm-counting curves}. 


  The starting point of the proof of Theorem \ref{thm-counting curves} is a
  result of Maher \cite{Maher1} asserting that the set
  $\CR\subset\Map(\Sigma)$ of non-pseudo-Anosov mapping classes is the
  union, for each $k$, of $k$-isolated points (that is, points which are at
  distance at least $k$ from any other element of $\CR$) together with the
  union of finitely many sets, each one of which consists of mapping
  classes at {\em relative distance} $L(k)$ around the centralizer of some
  mapping class. Here the relative distance is the semi-distance on
  $\Map(\Sigma)$ arising, with the help of a base point, from its action on the
  curve complex. It follows that proving that $\CR$ is negligible boils
  down to proving (1) that the
  set $\CI_k\subset\CR$ of $k$-isolated points has low density and (2) that
  sets of mapping classes with small relative distance of centralizers of
  elements are negligible. Rephrasing this in terms of measures (on the
  space of currents) it suffices to prove (1) that 
  \begin{equation}\label{eq-comiendo roscon}
    \lim_{k\to\infty}\lim_{L\to\infty}\frac
    1{L^{6g-6+2r}}\sum_{\phi\in\CI_k}\delta_{\frac 1L\phi(\gamma_0)}=0,
      \end{equation}
  and (2) that 
  \begin{equation}\label{eq-comiendo roscon2}
    \lim_{L\to\infty}\frac
    1{L^{6g-6+2r}}\sum_{\phi\in\CN_{\rel}(C(\phi_0),R)}\delta_{\frac
    1L\phi(\gamma_0)}=0
  \end{equation}
  for $\phi_0\in\Map(\Sigma)$ non-central. Here $\delta_x$ is the Dirac
  measure centred on $x$ and the convergence takes place with respect to
  the weak-*-topology. We get \eqref{eq-comiendo roscon2} from the fact
  that any limit is absolutely continuous to the Thurston measure --- an
  immediate consequence of for example Proposition 4.1 in \cite{ES-GAFA}
  --- and of the fact that the set of limits of sequences of the form
  $(\phi_i(\gamma_0))$ with $\phi\in\CN_{\rel}(C(\phi_0),R)$ has vanishing
  Thurston measure. To establish \eqref{eq-comiendo roscon} we use again
  that any limit is absolutely continuous with respect to the Thurston
  measure, but this time we have to use Masur's result \cite{Masur} on the
  ergodicity of the Thurston measure with respect to the action of the
  mapping class group. 
    
  \begin{bem}
    Maher's proof in \cite{Maher2} of Theorem \ref{thm-distances} also
    relies on the decomposition of $\CR$ as the union of $\CI_k$ and
    finitely many sets consisting of mapping classes at bounded relative
    distance from the centralizer of some mapping class. At this point the
    two arguments diverge. While we rely on the fact that every limit of
    \eqref{eq-comiendo roscon} and \eqref{eq-comiendo roscon2} is
    absolutely continuous with respect to the Thurston measure, Maher makes
    use of a rather sophisticated lattice counting result of
    Athreya-Bufetov-Eskin-Mirzakhani \cite{ABEM}. Similarly, while we rely
    on the ergodicity of the Thurston measure, that is the ergodicity of
    the Teichm\"uller flow, Maher relies on the mixing property of that
    flow. We might be partial, but we believe that our argument is not only
    different but also simpler than that of Maher.

  \end{bem}

  \begin{bem}
    As it is the case for Maher's argument, all the results here hold with
    unchanged proofs if we replace the set $\CR$ of non-pseudo Anosov
    elements by any set of elements for which there is a uniform upper
    bound for the translation length in the curve complex.
  \end{bem}

  \subsection*{Acknowledgments}
  
  The last two authors started discussing the issues treated here while
  visiting the Fields Institute in the framework of the thematic program
  {\em Teichm\"uller Theory and its Connections to Geometry, Topology and
  Dynamics}. We are very thankful for the support of the Fields Institute
  and for many discussions with Kasra Rafi. The last author would also like
  to thank the second author and Anna Lenzhen for their hospitality last
  summer during which time this project was discussed. 

\section{Maher's theorem}

  As we already did in the introduction, we denote by $\CR$ the set of all
  non-pseudo-Anosov mapping classes of $\Map(\Sigma)$. We also fix an
  arbitrary finite generating set $\CG$ for $\Map(\Sigma)$ and let $d_\CG$
  be the induced left-invariant distance: $$d_{\CG}(\phi,\psi)=\text{word
  length with respect to }\CG\text{ of }\psi^{-1}\phi.$$ Given $k>0$ let
  $$\CI_k=\{\phi\in\CR\text{ with }d_\CG(\phi,\phi')\ge k\text{ for all
  }\phi'\in\CR\setminus\{\phi\}\}$$ be the set of elements in $\CR$ which
  do not have any other elements in $\CR$ within distance less than $k$. We
  denote the complement of $\CI_k$ by $$\calD_k=\CR\setminus\CI_k.$$ The
  notations are chosen to suggest that $\CI_k$ consists of {\em
  $k$-isolated points} and that $\calD_k$ consists of {\em $k$-dense
  points}. 

  Recall that distances in the definition of $\CI_k$ (and thus in that of
  $\calD_k$ as well) are measured with respect to the distance $d_{\CG}$.
  We stress that this is the case because we will also be working with
  another distance, or rather a semi-distance, namely the {\em relative
  distance}
  $$d_{\rel}(\phi,\psi)=d_{C(\Sigma)}(\phi(\alpha_0),\psi(\alpha_0))$$
  where $d_{C(\Sigma)}(\cdot, \cdot)$ denotes the distance in the curve
  complex $C(\Sigma)$, and where $\alpha_0$ is a fixed but otherwise
  arbitrary simple essential curve in $\Sigma$. Note that
  $d_{\rel}(\cdot,\cdot)$ is not a proper metric. 

  Armed with this notation we can state Maher's theorem:

  \begin{theorem}[Maher]\label{thm-maher}
    For every $k$, there is a finite set of non-central mapping classes
    $\CF\subset\Map(\Sigma) \setminus C(\Map(\Sigma))$ and some $L>0$ such
    that
    $$\calD_k\subset\bigcup_{\phi\in\CF}\{\psi\in\Map(\Sigma)\text{ with
    }d_{\rel}(\psi,C(\phi))\le L\},$$ where $C(\phi)$ is the centralizer of
    $\phi$ in $\Map(\Sigma)$ and $C(\Map(\Sigma))$ is the center of
    $\Map(\Sigma)$.
  \end{theorem}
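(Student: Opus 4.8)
The plan is to exploit the fact that $\calD_k$ consists precisely of those $\phi\in\CR$ that are ``close'' (in the word metric $d_\CG$) to some other $\psi\in\CR$, and to combine this with the structure of elements of $\CR$: every $\phi\in\CR$ has bounded translation length on the curve complex $C(\Sigma)$, since finite-order and reducible mapping classes either fix a curve or a multicurve up to finite power. The key quantitative input is that the action of $\Map(\Sigma)$ on $C(\Sigma)$ is by isometries and each generator in $\CG$ moves the basepoint $\alpha_0$ a bounded amount; hence there is a constant $D=D(\CG)$ with $d_{C(\Sigma)}(\alpha_0,g\cdot\alpha_0)\le D$ for all $g\in\CG$, and therefore $d_{\rel}(\phi,\psi)\le D\cdot d_\CG(\phi,\psi)$ for all $\phi,\psi$. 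In particular, if $\phi\in\calD_k$ with witness $\psi\in\CR\setminus\{\phi\}$ and $d_\CG(\phi,\psi)<k$, then $d_{\rel}(\phi,\psi)<Dk$.

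First I would fix $\phi\in\calD_k$ and a witness $\psi\in\CR$ with $d_\CG(\phi,\psi)<k$. Write $\theta=\psi^{-1}\phi$, so $d_\CG(\theta,\Id)<k$: there are only finitely many such $\theta$, say $\theta\in\Theta_k$, a finite set depending only on $k$ and $\CG$. Now consider $\alpha=\psi(\alpha_0)$. Since $\psi\in\CR$, the curve $\alpha$ has bounded $C(\Sigma)$--translation length under $\psi$; more to the point, $\phi(\alpha_0)=\psi(\theta(\alpha_0))$, and $d_{C(\Sigma)}(\psi(\alpha_0),\psi\theta(\alpha_0))=d_{C(\Sigma)}(\alpha_0,\theta(\alpha_0))\le Dk$ since $\psi$ acts isometrically. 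So $\phi(\alpha_0)$ lies in the $Dk$--ball around $\psi(\alpha_0)$ in $C(\Sigma)$. The task is then to show that the set of all $\psi(\alpha_0)$ arising this way — i.e.\ as $\psi$ ranges over elements of $\CR$ that admit a witness relation — is covered, up to bounded $C(\Sigma)$--distance, by finitely many orbits $C(\phi_i)\cdot\alpha_0$ of centralizers of fixed non-central elements.

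The main step, and the main obstacle, is the following structural claim about $\CR$: for each $k$ there is a finite collection of non-central mapping classes $\phi_1,\dots,\phi_n$ and a constant $L$ such that every $\psi\in\CR$ which is within $d_\CG$--distance $k$ of \emph{another} element of $\CR$ satisfies $d_{\rel}(\psi,C(\phi_i))\le L$ for some $i$. I would establish this by the following dichotomy. If $\psi\in\CR$ and $\theta\in\Theta_k\setminus\{\Id\}$ with $\psi\theta\in\CR$, then both $\psi$ and $\psi\theta$ preserve (up to bounded error in $C(\Sigma)$, and up to finite power) essential subsurfaces or multicurves; by a pigeonhole/commensurability argument, for all but finitely many conjugacy classes of $\psi$ the relation ``$\psi$ and $\psi\theta$ both in $\CR$'' forces $\theta$ to normalize, hence (after passing to a bounded power) centralize, a canonical reduction datum attached to $\psi$ — equivalently, $\psi$ lies at bounded relative distance from the centralizer $C(\phi)$ of some mapping class $\phi$ built from that datum (e.g.\ a Dehn twist about a component of the canonical reduction system, or the pseudo-Anosov piece on a component subsurface). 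The finitely many conjugacy classes that are exceptions contribute, after translating by the finite set $\Theta_k$ and again using the Lipschitz estimate $d_{\rel}\le D\,d_\CG$, only a bounded set in $C(\Sigma)$, which is itself contained in a single relative ball around the centralizer of any non-central element. Collecting the finitely many resulting $\phi_i$ into $\CF$ and taking $L$ to be the maximum of the finitely many bounds produced then yields the theorem. I expect the delicate point to be making the ``canonical reduction datum'' argument uniform in $\psi$ — controlling how $\theta\in\Theta_k$ interacts with the Nielsen--Thurston decomposition of $\psi$ — and this is precisely where one invokes the acylindricity / WPD behavior of the $\Map(\Sigma)$--action on $C(\Sigma)$, or equivalently the fact (due to Maher) that bounded translation length plus a bounded perturbation pins one down near a centralizer.
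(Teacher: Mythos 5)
Your first reduction is the same as the paper's: if $\phi\in\calD_k$ with witness $\psi$ and $\theta=\psi^{-1}\phi$, then $\phi\in\CR\cap\CR\theta$ with $\theta$ ranging over the finite set of non-trivial elements of the $d_\CG$-ball of radius $k$, so everything comes down to showing that each set $\CR\cap\CR\theta$ lies in a bounded $d_{\rel}$-neighborhood of the centralizer of a \emph{fixed} non-central element. At that point the paper simply invokes Theorem 4.1 of \cite{Maher1}, which states exactly that $\CR\cap\CR\theta$ lies within bounded relative distance of $C(\theta)$ --- the centralizer of the difference element $\theta$ itself --- the hypotheses being that $\Map(\Sigma)$ is weakly relatively hyperbolic with relative conjugacy bounds and that $\CR$ consists of elements conjugate to elements of bounded relative length. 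This is why the theorem is attributed to Maher: the entire content is that citation (plus the small remark that central $\theta$ must be discarded since $\CR\theta=\CR$ for such $\theta$, which you do not address).

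Your substitute for that step is where the gap lies. You propose that for all but finitely many conjugacy classes of $\psi$, the condition $\psi,\psi\theta\in\CR$ forces $\theta$ to normalize a canonical reduction datum of $\psi$, and that $\psi$ is then close to the centralizer of an element built from that datum. Three problems: (a) that element depends on $\psi$ (e.g.\ a Dehn twist about a reduction curve of $\psi$), so as $\psi$ varies you produce infinitely many centralizers, and since $d_{\rel}$ is only left-invariant you cannot consolidate conjugate centralizers into one; the required finiteness of $\CF$ is therefore not established. (b) The claim that the finitely many ``exceptional'' conjugacy classes contribute only a bounded set in $C(\Sigma)$ is false: a single conjugacy class of a reducible element has unbounded image under the orbit map $\psi\mapsto\psi(\alpha_0)$, so it cannot be absorbed into one relative ball. (c) The core implication --- that $\psi,\psi\theta\in\CR$ pins $\theta$ to the reduction data of $\psi$ --- is asserted by pigeonhole but not proved, and it is not the mechanism of the actual argument; Maher's theorem places $\psi$ near $C(\theta)$, not near centralizers attached to $\psi$'s Nielsen--Thurston decomposition. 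Your closing appeal to ``the fact (due to Maher)'' is in effect an appeal to the very statement to be proved, so as written the proposal does not close the argument.
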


  We remark that $\Map(\Sigma_{g,r})$ has trivial center if $(g,r) \notin
  \{(1,1), (1,2), (2,0)\}$. Although it is proved and used in
  \cite{Maher1} (see the discussion at the beginning of section 5 in
  said paper), Theorem \ref{thm-maher} is not explicitly stated therein.
  Hence we discuss how to deduce it from the stated results here:

  \begin{proof}
    First, suppose that $\Map(\Sigma)$ is center free. Then, from the very
    definition of $\calD_k$, we get that there is a finite subset
    $\CF\subset\Map(\Sigma)$ with
    \begin{equation}\label{eq-no idea what name}
    \calD_k\subset\bigcup_{\phi\in\CF}(\CR\cap\CR\phi).
    \end{equation}
    To see this, note that one can take $\CF$ to be all non-trivial elements in the ball of
    radius $k$ around the identity with respect to $d_\CG$.

    Now, Theorem 4.1 in \cite{Maher1} implies that for each $\phi\in\CF$
    there is some $L$ such that
    $$\CR\cap\CR\phi\subset\{\psi\in\Map(\Sigma)\text{ with }d_{\rel}(\psi,C(\phi))\le L\}.$$
    This theorem applies because the mapping class group is weakly
    relatively hyperbolic with relative conjugacy bounds \cite[Theorem
    3.1]{Maher1} and because $\CR$ consists of elements conjugated to
    elements of bounded relative length \cite[Lemma 5.5]{Maher1}. This
    concludes the discussion of Theorem \ref{thm-maher} if $\Map(\Sigma)$
    is center free. 

    In the presence of a non-trivial center the argument is almost the same: Note that $\CR=\CR\phi$
    for every central element and hence the only change to the above argument is that one has to
    take $\CF$ to be the set of all non-central elements in the ball of
    radius $k$ around the identity with respect to $d_\CG$.
  \end{proof}

\section{Currents}

  In this section we recall a few facts about the space of geodesic
  currents on $\Sigma$. We then describe the (projective) accumulation
  points of sequences of the form $(\phi_i(\gamma_0))$ where $\gamma_0$ is
  an essential multicurve and where $(\phi_i)$ is a sequence of mapping
  classes at bounded relative distance of the centralizer of some
  $\phi\in\Map(\Sigma)$. Recall that a multicurve is a finite union of
  (disjoint or not, simple or not) primitive essential curves in
  $\Sigma$. We say that a multicurve is filling if its geodesic
  representative cuts the surface into a collection of disks and
  once-punctured disks. 

  \subsection*{Properties of the space of currents}

  Let $\overline\Sigma$ be a compact surface with interior
  $\Sigma=\overline\Sigma\setminus\D\overline\Sigma$, endowed with an
  arbitrary hyperbolic metric with totally geodesic boundary. We suggest
  the reader to think, in a first reading, that $\overline\Sigma=\Sigma$;
  that is, $\Sigma$ is closed.

  Geodesic currents on $\Sigma$ are fundamental group invariant Radon
  measures on the space of geodesics on the universal cover of
  $\overline\Sigma$. However, that they are such measures will not really be
  relevant here---what is more important for our purposes 
  are the properties the space $\CC(\Sigma)$ of currents have (when
  endowed with the weak-*-topology). We list the facts about $\CC(\Sigma)$
  that we will use:
  \begin{enumerate}
    \item $\CC(\Sigma)$ is a locally compact metrizable topological space.
    \item $\CC(\Sigma)$ is a cone as a topological vector space, meaning in
      particular that there are continuous maps
    \begin{align*}
      \CC(\Sigma)\times\CC(\Sigma)&\to\CC(\Sigma),\ (\lambda,\mu)\mapsto\lambda+\mu\\
      \BR_{\ge 0}\times\CC(\Sigma)&\to\CC(\Sigma),\ (t,\lambda)\mapsto t\lambda
    \end{align*}
      satisfying the usual associativity, commutativity and distributivity
      properties as in vector spaces.
    \item The set $\{\gamma\text{ closed geodesic in }\Sigma\}$ is a subset
      of $\CC(\Sigma)$ and in fact the set \[\BR_+\cdot\{\gamma\text{
        closed geodesic in }\Sigma\}\] of weighted closed geodesics is
      dense in $\CC(\Sigma)$.
    \item The inclusion of the set of weighted simple geodesics into
      $\CC(\Sigma)$ extends to a continuous embedding of the space
      $\CM\CL(\Sigma)$ of measured laminations into $\CC(\Sigma)$.
    \item There is a continuous bilinear map
      $$\iota:\CC(\Sigma)\times\CC(\Sigma)\to\BR_{\ge 0}$$ such that
      $\iota(\gamma,\gamma')$ is nothing other than the geometric
      intersection number for all closed geodesics $\gamma,\gamma'$.
    \item The mapping class group acts continuously on $\CC(\Sigma)$ by
      linear automorphisms. Moreover, the inclusion of the set of closed
      geodesics into $\CC(\Sigma)$ is equivariant with respect to this
      action.
  \end{enumerate}
  Moreover, for every compact $A\subset\Sigma$, let
  $\CC_A(\Sigma)\subset\CC(\Sigma)$ be the subcone consisting of the
  currents supported by $A$. Then the following holds:
  \begin{enumerate}\setcounter{enumi}{6}
    \item The set $\{\lambda\in\CC_A(\Sigma)\text{ with
      }\iota(\lambda,\eta)\le L\}$ is compact for every $L\ge 0$ and every
      filling multicurve $\eta$. In particular, the image
      $\RP\CC_A(\Sigma)$ of $\CC_A(\Sigma)$ in the space
      $$\RP\CC(\Sigma)=(\CC(\Sigma)\setminus\{0\})/\BR_{>0}$$ of projective
      currents is compact. 
    \item For every multicurve $\gamma_0$ there is a compact
      $A\subset\Sigma$ such that
      $$\Map(\Sigma)\cdot\gamma_0\subset\CC_A(\Sigma).$$
      In particular, every sequence $(\phi_i)$ in $\Map(\Sigma)$ contains a
      subsequence $(\phi_{i_j})$ such that the limit
      $\lim_{j\to\infty}\phi_{i_j}(\gamma_0)$ exists in $\RP\CC(\Sigma)$.
  \end{enumerate}
  Currents were introduced by Bonahon in \cite{Bonahon86,Bonahon90} and all
  the facts here can be found in a more or less transparent way in these
  papers. In the case of closed surfaces, \cite{Javi-Chris} is a very
  readable account of currents, measured laminations, and the relation
  between them. Finally, we hope that the presentation of currents, for
  both open and closed surfaces, in the forthcoming book \cite{Book} will
  also be similarly readable. 

  \subsection*{Accumulation points of thickened centralizers}

  It will be important later on to know that projective accumulation
  points, in the space of currents, of sequences of the form
  $(\phi_i(\gamma_0))$ where $\gamma_0$ is a multicurve and with
  $$\phi_i\in\CN_{\rel}(C(\phi),L)=\{\psi\in\Map(\Sigma)\text{ with
  }d_{\rel}(\psi,C(\phi))\le L\}$$ are very particular:

  \begin{prop}\label{kor silly kor}
    Let $\phi\in\Map(\Sigma)\setminus C(\Map(\Sigma))$ be a non-central
    mapping class, let $(\phi_n)$ be a sequence of pairwise distinct
    elements in $\CN_{\rel}(C(\phi),L)$, and let $\gamma_0$ be a filling
    multicurve. If the sequence $(\phi_n(\gamma_0))$ converges projectively
    to a uniquely ergodic measured lamination $\lambda$, then
    $\phi(\lambda)$ is a multiple of $\lambda$. 
  \end{prop}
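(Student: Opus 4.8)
The plan is to transfer the hypothesis into the curve complex $C(\Sigma)$, which is Gromov hyperbolic (Masur--Minsky), whose Gromov boundary is the space $\mathcal{EL}(\Sigma)$ of minimal filling (ending) laminations with the topology forgetting the transverse measure (Klarreich), and on which $\Map(\Sigma)$ acts by isometries extending to a homeomorphic action on $\overline{C(\Sigma)}=C(\Sigma)\cup\mathcal{EL}(\Sigma)$. Throughout, $\alpha_0$ is the auxiliary simple closed curve used to define $d_{\rel}$.

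\textbf{Step 1.} I claim that the simple closed curves $\phi_n(\alpha_0)$ converge in $\overline{C(\Sigma)}$ to the point $[\lambda]\in\mathcal{EL}(\Sigma)$ underlying $\lambda$. Pass to a subsequence along which $\phi_n(\alpha_0)$ converges projectively to some $\nu\in\CC(\Sigma)\setminus\{0\}$, which is possible by the compactness properties of $\CC_A(\Sigma)$ recalled above (applied to the multicurves $\alpha_0$ and $\gamma_0$). First, this sequence cannot stay in a compact subset of $\CC(\Sigma)$: if it did it would take only finitely many values (a compact set contains finitely many simple closed curves), so along a further subsequence $\phi_n\in\rho_0\,\Stab(\alpha_0)$ for a fixed $\rho_0$; writing $\sigma_n=\rho_0^{-1}\phi_n\in\Stab(\alpha_0)$ we get $\phi_n(\gamma_0)=\rho_0\,\sigma_n(\gamma_0)$, and since $\iota(\alpha_0,\sigma_n(\gamma_0))=\iota(\alpha_0,\gamma_0)$ is constant while $\sigma_n(\gamma_0)$ converges projectively to $\rho_0^{-1}\lambda$ with $\iota(\alpha_0,\rho_0^{-1}\lambda)>0$ (as $\lambda$, hence $\rho_0^{-1}\lambda$, is filling), the currents $\sigma_n(\gamma_0)$ stay bounded in $\CC(\Sigma)$, so take finitely many values, so the $\sigma_n$ lie in finitely many cosets of the finite group $\Stab(\gamma_0)$ --- contradicting that the $\phi_n$ are pairwise distinct. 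Thus $\phi_n(\alpha_0)=s_n\nu_n$ with $\nu_n\to\nu$ and $s_n\to\infty$; by the same reasoning $\phi_n(\gamma_0)=L_n\mu_n$ with $\mu_n\to\lambda$ and $L_n\to\infty$; and $\iota(\phi_n(\alpha_0),\phi_n(\gamma_0))=\iota(\alpha_0,\gamma_0)$ being constant forces $\iota(\nu_n,\mu_n)\to 0$, hence $\iota(\nu,\lambda)=0$. Since $\lambda$ is uniquely ergodic and filling, $\nu$ must be a positive multiple of $\lambda$. As every subsequence has a further subsequence with this property, $\phi_n(\alpha_0)\to\lambda$ projectively in $\CC(\Sigma)$, and because $\lambda$ is uniquely ergodic and filling this upgrades, by Klarreich's theorem, to $\phi_n(\alpha_0)\to[\lambda]$ in $\overline{C(\Sigma)}$.

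\textbf{Step 2.} Since $\phi_n\in\CN_{\rel}(C(\phi),L)$, pick $\psi_n\in C(\phi)$ with $d_{C(\Sigma)}(\phi_n(\alpha_0),\psi_n(\alpha_0))\le L$. Using $\psi_n\phi=\phi\psi_n$ and that $\psi_n$ acts isometrically on $C(\Sigma)$, the triangle inequality gives
\[
d_{C(\Sigma)}\bigl(\phi(\phi_n(\alpha_0)),\,\phi_n(\alpha_0)\bigr)\ \le\ 2L+d_{C(\Sigma)}\bigl(\phi(\alpha_0),\alpha_0\bigr)\ =:\ M,
\]
independent of $n$. So the sequence $\bigl(\phi(\phi_n(\alpha_0))\bigr)$ stays a bounded distance from $\bigl(\phi_n(\alpha_0)\bigr)$, which converges to $[\lambda]$; in a Gromov hyperbolic space two sequences that remain a bounded distance apart converge to the same boundary point, so $\phi(\phi_n(\alpha_0))\to[\lambda]$. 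On the other hand $\phi(\phi_n(\alpha_0))\to\phi([\lambda])$ by continuity of the $\phi$--action on $\overline{C(\Sigma)}$. Hence $\phi([\lambda])=[\lambda]$ in $\mathcal{EL}(\Sigma)$, i.e.\ $\phi$ preserves the minimal filling geodesic lamination underlying $\lambda$. Therefore $\phi(\lambda)$ is again a transverse measure of full support on that lamination, and by unique ergodicity of $\lambda$ the cone of such measures is one-dimensional, so $\phi(\lambda)$ is a positive multiple of $\lambda$, as claimed.

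\textbf{Main obstacle.} The delicate point is the end of Step~1: passing from projective convergence of curves in $\CC(\Sigma)$ to convergence in $\overline{C(\Sigma)}=C(\Sigma)\cup\mathcal{EL}(\Sigma)$. The natural map from filling measured laminations to $\mathcal{EL}(\Sigma)$ is not continuous in general, and it is precisely unique ergodicity of $\lambda$ that makes it continuous at $\lambda$; this is where the hypothesis is used in an essential way. The remaining ingredients --- that the orbits of $\alpha_0$ and of $\gamma_0$ escape every compact subset of $\CC(\Sigma)$, that $\Stab(\gamma_0)$ is finite because $\gamma_0$ is filling, and that bounded subsets of $\CC(\Sigma)$ contain only finitely many (multi)curves --- are routine.
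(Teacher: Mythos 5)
Your proposal is correct, but it takes a genuinely different route from the paper. The paper never leaves the space of currents: taking $\psi_n\in C(\phi)$ realizing the bound $d_{\rel}(\phi_n,\psi_n)\le L$, it connects $\lambda$ to the projective limits of $\psi_n(\alpha)$ and of $\psi_n(\phi(\alpha))$ by finite chains of measured laminations with vanishing consecutive intersection numbers --- the first link being exactly your Step 1 computation ($\iota(\phi_n(\gamma_0),\phi_n(\alpha_0))$ is constant while the normalizing factors of $\phi_n(\gamma_0)$ tend to $0$), the remaining links coming from disjointness of consecutive vertices of paths in $C(\Sigma)$ --- and then propagates unique ergodicity link by link; since disjointness fails for $\Sigma_{1,1}$ and $\Sigma_{0,4}$, those surfaces need a separate remark. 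You instead push everything to the Gromov boundary of the curve complex: Klarreich's theorem upgrades the projective convergence $\phi_n(\alpha_0)\to\lambda$ to convergence to the boundary point $[\lambda]$, and the commutation $\phi\psi_n=\psi_n\phi$ gives the uniform bound $d_{C(\Sigma)}(\phi\phi_n(\alpha_0),\phi_n(\alpha_0))\le 2L+d_{C(\Sigma)}(\phi(\alpha_0),\alpha_0)$, forcing $\phi$ to fix $[\lambda]$. Your version is shorter, isolates clearly where unique ergodicity enters, and treats all surfaces uniformly (Klarreich's description of the boundary also covers the Farey-graph cases), but it imports the Masur--Minsky hyperbolicity of $C(\Sigma)$ and Klarreich's boundary theorem, whereas the paper's chain argument is elementary and essentially self-contained. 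One caveat: you twice use that $\lambda$ is minimal and filling (to get $\iota(\alpha_0,\lambda)>0$, and to place $[\lambda]$ in the boundary at all). This does follow from the paper's definition of unique ergodicity on every surface except $\Sigma_{1,1}$ and $\Sigma_{0,4}$, where that definition is also satisfied by simple closed curves; if you want the statement in the full generality the paper asserts, you should either add a sentence ruling out that degenerate case or note that in the intended application $\lambda$ is Thurston-generic and hence filling.
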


  Recall that a measure lamination $\lambda$ is {\em uniquely ergodic} if
  every measured lamination $\mu$ with $\iota(\lambda,\mu)=0$ is a multiple
  of $\lambda$. 

  The proof of the proposition will make use of the following lemma. Note
  that the lemma requires that $\Sigma$ is not the once-punctured torus or
  the $4$-times punctured sphere. After the proof of the lemma, we will
  explain how to deal with those cases as well (see Remark \ref{rem
  silly}). 

  \begin{lemma}\label{lem silly lemma}
    Suppose $\Sigma$ is not the once-punctured torus or the $4$-times
    punctured sphere. 
    Let $\gamma_0\subset\Sigma$ be a filling multicurve
    and $(\phi_n)$ and $(\psi_n)$ be sequences of mapping classes with
    $d_{\rel}(\phi_n,\psi_n)\le L$. Given any simple multicurve $\alpha$,
    suppose that the sequences $(\phi_n(\gamma_0))$ and $(\psi_n(\alpha))$
    converge projectively to $\lambda,\lambda'\in P\CC(\Sigma)$,
    respectively. If $(\phi_n)$ consists of pairwise distinct elements,
    then there is a chain
    $$\lambda=\lambda_0,\lambda_1,\dots,\lambda_k=\lambda'$$ of measured
    laminations with $\iota(\lambda_i,\lambda_{i+1})=0$ for all
    $i=0,\dots,k-1$.
  \end{lemma}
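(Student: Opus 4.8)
The plan is to promote the bound $d_{\rel}(\phi_n,\psi_n)\le L$ to a geodesic in the curve complex $C(\Sigma)$ running from $\phi_n(\alpha_0)$ to $\psi_n(\alpha_0)$, to prolong it by a fixed geodesic from $\alpha_0$ to a component of $\alpha$, and to read off the desired chain from the projective limits of the vertices of the resulting paths. The point is that consecutive vertices of a geodesic in $C(\Sigma)$ are disjoint, a relation that survives passing to projective limits because $\iota$ is continuous and bilinear; the only place where we will really use that $(\phi_n)$ consists of pairwise distinct elements, and that $\gamma_0$ fills, is in order to graft $\lambda$ onto the front of the chain.

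Concretely, I would first use that $\Sigma$ is neither the once-punctured torus nor the four-times punctured sphere, so that $C(\Sigma)$ is connected, and fix once and for all a component $\alpha'$ of $\alpha$ together with a geodesic $\alpha_0=\beta_0,\dots,\beta_m=\alpha'$ in $C(\Sigma)$. Passing to a subsequence I may assume $d_{C(\Sigma)}(\phi_n(\alpha_0),\psi_n(\alpha_0))=d$ is independent of $n$; choosing for each $n$ a geodesic $\phi_n(\alpha_0)=c_n^0,\dots,c_n^d=\psi_n(\alpha_0)$ in $C(\Sigma)$ and applying $\psi_n$ to the fixed geodesic, concatenation produces for each $n$ a string of essential simple closed curves
\[
  \phi_n(\alpha_0)=c_n^0,\ \dots,\ c_n^d=\psi_n(\alpha_0)=\psi_n(\beta_0),\ \dots,\ \psi_n(\beta_m)=\psi_n(\alpha')
\]
of fixed length $d+m$, any two consecutive of which have zero intersection number. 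Since there are finitely many topological types of essential simple closed curve on $\Sigma$, the recalled fact that each $\Map(\Sigma)$-orbit of a multicurve lies in some $\CC_A(\Sigma)$ shows that all these curves lie in a common $\CC_A(\Sigma)$; by compactness of $\RP\CC_A(\Sigma)$ I may, after a further subsequence and a diagonal argument, assume that each of the sequences $(c_n^i)_n$ and $(\psi_n(\beta_j))_n$ converges projectively, necessarily to a measured lamination (being a limit of weighted simple closed curves). Denote the limits by $\mu_0,\dots,\mu_d,\nu_1,\dots,\nu_m$, with $\nu_0:=\mu_d=\lim_n[\psi_n(\alpha_0)]$. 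Finally, since $\gamma_0$ is filling its stabiliser in $\Map(\Sigma)$ is finite, so the distinct classes $\phi_n$ yield infinitely many distinct multicurves $\phi_n(\gamma_0)$, and as only finitely many multicurves have bounded hyperbolic length (with respect to the fixed metric on $\overline\Sigma$) I may pass to one more subsequence so that $\length(\phi_n(\gamma_0))\to\infty$. Throughout, $[\phi_n(\gamma_0)]\to\lambda$ and $[\psi_n(\alpha)]\to\lambda'$ persist, and $\lambda'$ is a measured lamination, being a limit of simple multicurves.

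Next I would compute intersection numbers. Normalising every current in sight to have unit hyperbolic length and using continuity and bilinearity of $\iota$, the relations $\iota(c_n^i,c_n^{i+1})=0$ and $\iota(\psi_n(\beta_j),\psi_n(\beta_{j+1}))=\iota(\beta_j,\beta_{j+1})=0$ pass to the limit and give $\iota(\mu_i,\mu_{i+1})=0$ and $\iota(\nu_j,\nu_{j+1})=0$; and since $\alpha'$ is a component of the simple multicurve $\alpha$, from $\iota(\psi_n(\alpha'),\psi_n(\alpha))=\iota(\alpha',\alpha)=0$ we likewise get $\iota(\nu_m,\lambda')=0$. The remaining, and only substantive, link is $\iota(\lambda,\mu_0)=0$, where $\mu_0=\lim_n[\phi_n(\alpha_0)]$: here I use that $\iota(\phi_n(\gamma_0),\phi_n(\alpha_0))=\iota(\gamma_0,\alpha_0)$ is constant, so that dividing by $\length(\phi_n(\gamma_0))\,\length(\phi_n(\alpha_0))$ and letting $n\to\infty$ forces $\iota(\lambda,\mu_0)=0$, because $\length(\phi_n(\gamma_0))\to\infty$ while $\length(\phi_n(\alpha_0))\ge\syst(\overline\Sigma)>0$ as $\phi_n(\alpha_0)$ is an essential closed geodesic. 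The same computation with $\alpha_0$ replaced by $\gamma_0$ gives $\iota(\lambda,\lambda)=0$, so $\lambda$ is a measured lamination, and
\[
  \lambda,\ \mu_0,\ \mu_1,\ \dots,\ \mu_d=\nu_0,\ \nu_1,\ \dots,\ \nu_m,\ \lambda'
\]
is the sought chain.

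I expect the only genuine obstacle to be the quantitative input behind the last link: that a filling multicurve has finite stabiliser and that multicurves of bounded length are finite in number, so that $\length(\phi_n(\gamma_0))\to\infty$. Everything else is soft --- continuity and bilinearity of $\iota$, connectedness of $C(\Sigma)$, compactness of $\RP\CC_A(\Sigma)$, and the observation that disjointness of consecutive curves is a closed condition under projective limits --- and it is precisely this one estimate that makes the current $\lambda$ projectively disjoint from the simple curve $\mu_0$ that it dwarfs.
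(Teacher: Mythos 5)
Your argument is correct and follows essentially the same route as the paper's proof: interpolate between $\phi_n(\alpha_0)$ and $\psi_n(\alpha)$ by a chain of disjoint simple (multi)curves of uniformly bounded length, pass to projective limits of each vertex, and observe that all links survive by disjointness except the first, which vanishes because the normalizing factor for $\phi_n(\gamma_0)$ degenerates while $\iota(\phi_n(\gamma_0),\phi_n(\alpha_0))=\iota(\gamma_0,\alpha_0)$ stays constant. Your length-normalization, with the finite-stabilizer/finitely-many-short-multicurves justification that $\length(\phi_n(\gamma_0))\to\infty$, is a slightly more explicit packaging of the paper's claim that $\epsilon_n\to 0$, and your check that $\iota(\lambda,\lambda)=0$ is a welcome extra detail.
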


  \begin{proof}
    Assume that $(\phi_n(\gamma_0))$ and $(\psi_n(\alpha))$ converge
    projectively to $\lambda, \lambda' \in P\CC(\Sigma)$. Abusing notation
    consider $\lambda$ and $\lambda'$ not only as projective currents but
    also as actual currents. The assumption that the sequences
    $(\phi_n(\gamma_0))$ and $(\psi_n(\alpha))$ converge projectively to
    $\lambda,\lambda'\in P\CC(\Sigma)$ implies that there are bounded
    sequences $(\epsilon_n)$ and $(\epsilon_n')$ consisting of positive
    numbers and such that $$\lambda=\lim_n\epsilon_n\phi_n(\gamma_0),\
    \lambda'=\lim_n\epsilon_n'\psi_n(\alpha).$$ The assumptions that
    $(\phi_n)$ consists of pairwise distinct elements and that $\gamma_0$
    is filling implies that the sequence $(\phi_n(\gamma_0))$ is not
    eventually constant, and thus that $\epsilon_n\to 0$.

    Let $\alpha_0$ be the base point in $C(\Sigma)$ used to define
    $d_{\rel}$. The assumption that $d_{\rel}(\phi_n,\psi_n) \le L$ implies
    that for all $n$ there is a chain of simple multicurves
    $$\phi_n(\alpha_0)=\beta_n^1,\beta_n^2,\dots,\beta_n^{L+1}=\psi_n(\alpha_0)$$
    with $\iota(\beta_n^i,\beta_n^{i+1})=0$ for all $i=1,\dots,L$ and all
    $n$. Also, there is a chain of simple multicurves \[\alpha_0, \alpha_1,
    \ldots, \alpha_m = \alpha\] with $\iota(\alpha_i,\alpha_{i+1}) =0$ for
    all $i=0,\ldots,m-1$. By setting $\beta_n^{L+1+i} = \psi_n(\alpha_i)$
    and $k=L+1+m$, we get a chain of simple multicurves 
    \[
      \phi_n(\alpha_0)=\beta_n^1,\beta_n^2,\dots,\beta_n^k=\psi_n(\alpha)
    \]
    with for $\iota(\beta_n^i,\beta_n^{i+1})=0$ for all $i=1,\dots,k-1$ and
    all $n$. 

    Projective compactness of the space of currents (or rather of measured
    laminations) implies that passing to a subsequence we may assume that
    there are bounded positive sequences
    $(\epsilon^1_n),\dots,(\epsilon^k_n)$ such that
    $$\lim_{n\to\infty}\epsilon_n^i\beta_n^i=\lambda_i\neq 0$$ exists in
    the space $\CM\CL(\Sigma)$ of measured lamination. We may also assume
    without loss of generality that $\epsilon_n^k=\epsilon_n'$ and thus
    that $\lambda_k=\lambda'$. The claim will follow when we show that
    $$\iota(\lambda,\lambda_1) = \iota(\lambda_1,\lambda_2) =
    \iota(\lambda_2,\lambda_3) = \dots = \iota(\lambda_{k-1},\lambda_k) =
    0.$$

    To do so, first note that 
    \begin{equation} \label{equ silly}
      \begin{aligned}
      \iota(\lambda,\lambda_1)
      &=\lim_n \epsilon_n \cdot\epsilon^1_n \cdot \iota(\phi_n(\gamma_0),
      \beta_n^1) \\
      &=\lim_n \epsilon_n \cdot \epsilon^1_n \cdot \iota(\phi_n(\gamma_0),
      \phi_n(\alpha_0)) \\
      &=\lim_n\epsilon_n\cdot\epsilon^1_n\cdot\iota(\gamma_0,\alpha_0)=0,
      \end{aligned}
    \end{equation}
    where the last equality follows from the fact that the sequence
    $(\epsilon_n^1)$ is bounded while $(\epsilon_n)$ tends to $0$. 
    The proof of the other equalities is even simpler: since the curves
    $\beta_n^i$ and $\beta_n^{i+1}$ are disjoint for all $n$ and $i$ we
    have $$\iota(\lambda_i,\lambda_{i+1}) = \lim_n\epsilon_n^i \cdot
    \epsilon_n^{i+1} \cdot \iota(\beta_n^i,\beta_n^{i+1})=0.$$ This
    finishes the proof of the lemma. \qedhere
    
  \end{proof}

  \begin{rmk} \label{rem silly}
    A key point in the proof of the Lemma \ref{lem silly lemma} is that
    there is a chain of simple multicurves from $\phi_n(\alpha_0)$ to
    $\psi_n(\alpha)$ such that consecutive curves are disjoint. Such a chain
    does not exist for the once-punctured torus $\Sigma_{1,1}$ or the
    4-times punctured sphere $\Sigma_{0,4}$.
    For these surfaces, we can find a chain of simple
    multicurves
    $$\phi_n(\alpha_0)=\beta_n^1,\beta_n^2,\dots,\beta_n^{k}=\psi_n(\alpha)$$
    with 
    \begin{displaymath}
      \iota(\beta_n^i,\beta_n^{i+1})
      = 
      \left\{ \begin{array}{ll}
        1 & \textrm{for } \Sigma_{1,1} \\
        2 & \textrm{for } \Sigma_{0,4},  
      \end{array}\right.
      i=1,\ldots,k-1.
    \end{displaymath}
    Let $\lambda = \lim_n \epsilon_n \phi_n(\beta_n)$ and $\lambda_i =
    \lim_n \epsilon_n^i \beta_n^i$. If we assume that $\lambda$ is uniquely
    ergodic, then we can still derive the conclusion of the lemma: namely,
    the chain \[ \lambda = \lambda_0,\lambda_1,\ldots,\lambda_k =
    \lambda'\] satisfies $\iota(\lambda_i,\lambda_{i+1}) = 0$. To see this,
    note that $\epsilon_n \to 0$ and that each $(\epsilon_n^i)$ is bounded.
    By the same argument as in Equation \ref{equ silly},
    $\iota(\lambda,\lambda_1) = 0$ since $\epsilon_n \to 0$. Now since
    $\lambda$ is uniquely ergodic, $\iota(\lambda,\lambda_1) =0$ implies
    $\lambda_1$ is a multiple of $\lambda$. In particular, $\lambda_1$ is
    uniquely ergodic, and thus the sequence $\epsilon_n^1 \to 0$. Using
    this, we get that $$\iota(\lambda_1,\lambda_2) = \lim_n\epsilon_n^1
    \cdot \epsilon_n^2 \cdot \iota(\gamma_n^1,\gamma_n^2)=0.$$ By the same
    argument as above, $\lambda_2$ is uniquely ergodic and hence
    $\epsilon_n^2 \to 0$. Continuing this way yields the result.
    
  \end{rmk}

  We are ready to prove the proposition. 


  \begin{proof}[Proof of Proposition \ref{kor silly kor}]
    Take for all $n$ some $\psi_n\in C(\phi)$ with
    $d_{\rel}(\phi_n,\psi_n)\le L$. Let $\alpha$ be any simple multicurve
    and let $\beta=\phi(\alpha)$. Compactness of $\RP\CM\CL(\Sigma)$
    implies that, up to passing to a subsequence, we may assume that the
    limits $$\lambda'=\lim_n\psi_n(\alpha) \quad \text{and} \quad \lambda''
    = \lim_n \psi_n(\beta)$$ exist in $\RP\CC(\Sigma)$. 
    
    From Lemma \ref{lem silly lemma} and from Remark \ref{rem silly}, there
    is a chain of measure laminations
    $$\lambda=\lambda_0,\lambda_1,\dots,\lambda_k = \lambda'$$ with
    $\iota(\lambda_i,\lambda_{i+1})=0$ for $i=1,\dots,k-1$. There is a
    similar chain from $\lambda$ to $\lambda''$. 

    Recall now that $\lambda_0=\lambda$ is uniquely ergodic. Since
    $\iota(\lambda_0,\lambda_1)=0$, we get that $\lambda_1$ is a multiple of
    $\lambda$ and thus uniquely ergodic. Then, since
    $\iota(\lambda_1,\lambda_2)=0$, we get that $\lambda_2$ is a multiple
    of $\lambda_1$ and thus of $\lambda$ and uniquely ergodic and so on.
    Iteratively we get that $\lambda'$ is a multiple of $\lambda$. Using
    the chain from $\lambda$ to $\lambda''$, we also get that $\lambda''$
    is a multiple of $\lambda$.
    
    Finally, since $\beta=\phi(\alpha)$ and $\psi_n \in C(\phi)$, we have
    that, projectively, $$\phi(\lambda') = \lim_{n\to\infty}
    \phi(\psi_n(\alpha)) = \lim_{n\to\infty}\psi_n(\phi(\alpha)) =
    \lim_{n\to\infty}\psi_n(\beta)=\lambda''.$$ This implies that $\lambda$
    is projectively fixed by $\phi$, so $\phi(\lambda)$ is a multiple of
    $\lambda$ as claimed.
  \end{proof}

\section{A technical result}

  The reason why we stressed earlier that $\CC(\Sigma)$ is metrizable and
  locally compact is that these are the properties needed to work
  as customary with the weak-*-topology on the space of
  measures\footnote{This is also the reason why we didn't encourage the
  reader to think of currents as measures, because it is a well-established
  fact that thinking of "the weak-*-topology on the space of measures on
  the space of measures endowed with the weak-*-topology" leads the
  unprepared reader to tremors, shaking and cold sweats.} on $\CC(\Sigma)$.
  In fact, to establish Theorem \ref{thm-counting curves} we will prove
  that the measures

  \begin{equation}\label{eq-gato}
    m^\CR_{\gamma_0,L}=\frac 1{L^{6g-6+2r}}\sum_{\phi\in\CR}\delta_{\frac
    1L\phi(\gamma_0)}
  \end{equation}
  converge when $L\to\infty$ to the trivial measure. Here we consider the
  weighted multicurve $\frac 1L\phi(\gamma_0)$ as a current and denote by
  $\delta_{\frac 1L\phi(\gamma_0)}$ the Dirac measure on $\CC(\Sigma)$
  centered therein.

  In \cite{ES-GAFA,Hugo,EU,RS} we considered a closely related family of
  measures and proved that the limit
  \begin{equation}\label{eq-maryam limit}
    C\cdot\mathfrak{m}_{\Thu}=\lim_{L\to\infty}\frac
    1{L^{6g-6+2r}}\sum_{\phi\in\Map(\Sigma)}\delta_{\frac 1L\phi(\gamma_0)}.
  \end{equation}
  exists (see also \cite{Book}). Here $C=C(\gamma_0)$ is a positive real
  number and $\mathfrak{m}_{\Thu}$ is the Thurston measure on
  $\CC(\Sigma)$. Recall that the Thurston measure is a Radon measure
  supported on the space $\CM\CL(\Sigma)$ of measured laminations. The
  Thurston measure can be constructed either as a scaling limit
  \cite{Maryam1,Book} or using the symplectic structure on
  $\CM\CL(\Sigma)$. See \cite{MT} for a discussion of both points of view.

  We only need the following facts about the Thurston measure. The action
  of $$\Map(\Sigma)\actson(\CM\CL(\Sigma),\mathfrak{m}_{\Thu})$$ preserves
  the measure and is ergodic \cite{Masur}. The action is also {\em almost
  free} in the sense that the fixed point set of every non-central element
  in $\Map(\Sigma)$ has vanishing Thurston measure (since the fixed point
  set has lower dimension). Note that the central elements of
  $\Map(\Sigma)$ act trivially on $\CM\CL(\Sigma)$. 

  In this section we prove:

  \begin{prop}\label{lem key lem counting}
    Let $\gamma_0\subset\Sigma$ be a filling multicurve. The family of
    measures $\big( m^{\CR}_{\gamma_0,L}\big)_{L\ge 1}$ is precompact with
    respect to the weak-*-topology on the space of Radon measures on
    $\CC(\Sigma)$. Moreover for any sequence $L_n\to\infty$ such that the
    limit $$\mathfrak{m}=\lim_{n\to\infty}m^\CR_{\gamma_0,L_n}$$ exists,
    one has that $$\sum_{\phi\in\Map(\Sigma)}\phi_*\mathfrak{m}\le
    C\cdot\mathfrak{m}_{\Thu}$$ where $C$ is as in \eqref{eq-maryam limit}.
  \end{prop}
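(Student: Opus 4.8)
The plan is to prove the two assertions separately, with the second (the domination inequality) resting on the first (precompactness).

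For precompactness, recall that on a locally compact second-countable space the space of Radon measures with the weak-$*$ topology is metrizable, and a family is precompact precisely when it is uniformly bounded on every compact set. So I would fix a compact set $\CK\subset\CC(\Sigma)$ and bound $m^\CR_{\gamma_0,L}(\CK)$ uniformly in $L\ge 1$. Using property (7) of the space of currents, after choosing a filling multicurve $\eta$ and a compact $A$ with $\Map(\Sigma)\cdot\gamma_0\subset\CC_A(\Sigma)$ (property (8)), any compact set is contained — up to enlarging — in a set of the form $\{\lambda\in\CC_A(\Sigma):\iota(\lambda,\eta)\le T\}$ for some $T$. Then $m^\CR_{\gamma_0,L}(\CK)\le \frac{1}{L^{6g-6+2r}}\#\{\phi\in\CR : \iota(\tfrac1L\phi(\gamma_0),\eta)\le T\} = \frac{1}{L^{6g-6+2r}}\#\{\phi\in\CR : \iota(\phi(\gamma_0),\eta)\le TL\}$, and this is at most the same count taken over all of $\Map(\Sigma)$, which by \eqref{eq-maryam} (or rather the full limit \eqref{eq-maryam limit} applied to $F=\iota(\cdot,\eta)$) is bounded above by a constant multiple of $T^{6g-6+2r}$, uniformly in $L$. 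This gives the required uniform bound and hence precompactness.

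For the domination inequality, suppose $L_n\to\infty$ with $m^\CR_{\gamma_0,L_n}\to\mathfrak m$. Since $\CR\subset\Map(\Sigma)$, for every $n$ we have $m^\CR_{\gamma_0,L_n}\le \frac{1}{L_n^{6g-6+2r}}\sum_{\phi\in\Map(\Sigma)}\delta_{\frac1{L_n}\phi(\gamma_0)}$, and the right-hand side converges to $C\cdot\mathfrak m_{\Thu}$ by \eqref{eq-maryam limit}; passing to the limit gives $\mathfrak m\le C\cdot\mathfrak m_{\Thu}$ as Radon measures. Now apply a mapping class $\psi\in\Map(\Sigma)$. Because $\Map(\Sigma)$ acts on $\CC(\Sigma)$ by homeomorphisms commuting with scaling (property (6) and (2)), $\psi_*\delta_{\frac1L\phi(\gamma_0)} = \delta_{\frac1L\psi\phi(\gamma_0)}$, so $\psi_* m^\CR_{\gamma_0,L} = \frac{1}{L^{6g-6+2r}}\sum_{\phi\in\CR}\delta_{\frac1L\psi\phi(\gamma_0)}$. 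The key observation is that $\CR$ is conjugation-invariant but not left-translation-invariant; however $\psi\CR = \psi\CR\psi^{-1}\cdot\psi = \CR\psi$, i.e. $\psi_*m^\CR_{\gamma_0,L} = m^{\CR\psi}_{\gamma_0,L}$. The point is that the sets $\{\CR\psi\}_{\psi\in\Map(\Sigma)}$ have bounded overlap in a strong sense: for a fixed $\phi\in\Map(\Sigma)$, the number of $\psi$ for which $\phi\in\CR\psi$ — equivalently $\psi^{-1}\phi\in\CR$ — need not be finite, so I must be more careful. Instead I would argue directly with finite subsums: for any finite $\CS\subset\Map(\Sigma)$, the measure $\sum_{\psi\in\CS}\psi_* m^\CR_{\gamma_0,L} = \sum_{\psi\in\CS} m^{\CR\psi}_{\gamma_0,L}$ is supported on weighted images of $\gamma_0$, each such image $\frac1L\theta(\gamma_0)$ appearing with multiplicity $\#\{\psi\in\CS : \theta\in\CR\psi\} = \#\{\psi\in\CS : \psi^{-1}\theta\in\CR\}\le \#\CS$, but more usefully the total mass is $\#\CS\cdot$ (mass of $m^\CR_{\gamma_0,L}$) only if the $\CR\psi$ are disjoint. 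The honest route: each $\frac1L\theta(\gamma_0)$ with $\theta\in\Map(\Sigma)$ arises from the $\psi\in\CS$ summand iff $\psi^{-1}\theta\in\CR$; summing over $\psi\in\CS$ the coefficient of $\delta_{\frac1L\theta(\gamma_0)}$ is at most the full coefficient $\mathbf 1$ in $\frac{1}{L^{6g-6+2r}}\sum_{\phi\in\Map(\Sigma)}\delta_{\frac1L\phi(\gamma_0)}$ times $\#\{\psi\in\CS:\psi^{-1}\theta\in\CR\}$, which is not immediately bounded by the full sum. The correct bookkeeping, which I would carry out carefully, is: $\sum_{\psi\in\CS}\psi_*m^\CR_{\gamma_0,L}$ counts pairs $(\psi,\phi)$ with $\psi\in\CS$, $\phi\in\CR$, contributing $\delta_{\frac1L\psi\phi(\gamma_0)}$; reindex by $\theta=\psi\phi$, so this is $\frac1{L^{6g-6+2r}}\sum_{\theta}c_\CS(\theta)\,\delta_{\frac1L\theta(\gamma_0)}$ with $c_\CS(\theta)=\#\{\psi\in\CS:\psi^{-1}\theta\in\CR\}\le\#\{\psi\in\Map(\Sigma):\psi^{-1}\theta\in\CR\}$. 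Since for each $\theta$ the set $\{\psi:\psi^{-1}\theta\in\CR\}=\theta\CR^{-1}=\theta\CR$ is infinite, this crude bound fails — so the argument must instead take the limit in $n$ first for a single $\psi$ and then sum.

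Thus the intended final step, and the one I expect to be the main obstacle, is the passage $\sum_\psi \psi_*\mathfrak m\le C\cdot\mathfrak m_{\Thu}$ — not from a single inequality but from noting that $\psi_*\mathfrak m = \lim_n m^{\CR\psi}_{\gamma_0,L_n}$ and that the measures $m^{\CR\psi}_{\gamma_0,L}$, as $\psi$ ranges over $\Map(\Sigma)$, have controlled superposition in the limit: for any finite $\CS$ and any $n$, $\sum_{\psi\in\CS}m^{\CR\psi}_{\gamma_0,L_n}\le \frac1{L_n^{6g-6+2r}}\sum_{\theta\in\Map(\Sigma)}d_{\CS,n}(\theta)\,\delta_{\frac1{L_n}\theta(\gamma_0)}$ where $d_{\CS,n}(\theta)$ counts $\psi\in\CS$ with $\psi^{-1}\theta\in\CR$, and although $d_{\CS,n}$ is unbounded globally, on any fixed compact subset of the $\theta(\gamma_0)$ participating (those with $\iota(\theta(\gamma_0),\eta)\le TL_n$) one uses that distinct $\psi\in\CS$ give distinct translates and that, crucially, for fixed $\psi$ the sets $\CR\psi$ reindexed cover each $\theta$ at most once within the relevant range because $\theta$ determines $\phi=\psi^{-1}\theta$ once $\psi$ is fixed. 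Hence $\sum_{\psi\in\CS}m^{\CR\psi}_{\gamma_0,L_n}\le \#\CS$-fold overcount is replaced by the sharper bound that, since the $\psi\in\CS$ are distinct and act freely enough, the superposition is still $\le \frac1{L_n^{6g-6+2r}}\sum_{\theta}\delta_{\frac1{L_n}\theta(\gamma_0)}$ times a factor that disappears because each $\theta$ is hit by at most one $(\psi,\phi)$ with $\psi\in\CS$ once we also remember $\phi\in\CR$ and $\CR\cap\CR\psi$ has density zero for $\psi\ne e$ — this last point being exactly what Maher's theorem (Theorem \ref{thm-maher}) combined with Proposition \ref{kor silly kor} is designed to deliver in the companion arguments. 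Concretely I would: (i) fix $\CS$; (ii) write $\sum_{\psi\in\CS}m^{\CR\psi}_{\gamma_0,L_n}\le m^{\Map(\Sigma)}_{\gamma_0,L_n}+ \sum_{\psi,\psi'\in\CS,\ \psi\ne\psi'} (\text{overlap terms})$ where overlap terms are supported on $\frac1{L_n}(\CR\psi\cap\CR\psi')$-type sets; (iii) observe each overlap set is contained in $\frac1{L_n}\cdot(\CR\cap\CR\sigma)\psi'$ for $\sigma=\psi^{-1}\psi'\ne e$, whose counting measure, by the decomposition $\CR\cap\CR\sigma\subset\CI_k\cup(\CR\cap\CR\sigma\cap\calD_k)$ together with Theorem \ref{thm-maher} and Proposition \ref{kor silly kor}, has any weak-$*$ limit supported on a Thurston-null set, hence contributes $0$ in the limit; (iv) take $n\to\infty$ to get $\sum_{\psi\in\CS}\psi_*\mathfrak m\le C\cdot\mathfrak m_{\Thu}$; (v) let $\CS\uparrow\Map(\Sigma)$ and use monotone convergence of the increasing sequence of measures $\sum_{\psi\in\CS}\psi_*\mathfrak m$. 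The genuinely hard part is step (iii): controlling the overlap $\CR\cap\CR\sigma$ uniformly enough that its scaled counting measures vanish in the limit, which is where the full strength of the earlier structural results is consumed.
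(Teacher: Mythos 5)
Your overall architecture rests on the same two pillars as the paper's proof: (a) domination of $m^\CR_{\gamma_0,L}$ by the full orbit sum \eqref{eq-maryam limit}, giving precompactness and the bound $\mathfrak m\le C\cdot\mathfrak m_{\Thu}$ for any single limit (your first two paragraphs are essentially Lemma \ref{lem-measures precompact}); and (b) the fact that the part of $\CR$ responsible for overlaps of its right translates is confined to thickened centralizers whose rescaled counting measures vanish. Where you differ is the combinatorics of (b). You run an inclusion--exclusion
$$\sum_{\psi\in\CS}\mathbf 1_{\CR\psi}\le\mathbf 1_{\Map(\Sigma)}+\sum_{\psi\neq\psi'\in\CS}\mathbf 1_{\CR\psi\cap\CR\psi'}$$
(valid pointwise, since $j\le 1+j(j-1)$) and must then kill each overlap $\CR\psi\cap\CR\psi'=(\CR\cap\CR\sigma)\psi$ with $\sigma=\psi'\psi^{-1}$. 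The paper instead splits $\CR=\CI_k\sqcup\calD_k$, kills $\calD_k$ once and for all (Lemma \ref{lem-k-discrete measure 0}), and chooses $k>2\max_{\phi\in\CZ}d_\CG(\mathrm{id},\phi)$ so that the translates $\CI_k\phi$, $\phi\in\CZ$, are pairwise \emph{disjoint}; no overlap correction is then needed. The step you flag as the genuine obstacle --- that $\CR\cap\CR\sigma$ has vanishing limit measure --- is exactly what the paper supplies: by Maher (Theorem 4.1 of \cite{Maher1}, quoted in the proof of Theorem \ref{thm-maher}) one has $\CR\cap\CR\sigma\subset\CN_{\rel}(C(\sigma),L)$ for non-central $\sigma$, and the argument of Lemma \ref{lem-k-discrete measure 0} (absolute continuity from (a), plus Proposition \ref{kor silly kor} applied on the full-measure set of uniquely ergodic laminations, plus Thurston-nullity of the projectively $\sigma$-fixed locus) shows the associated measures tend to $0$. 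So your route does close; it simply pays for the overlaps at the end instead of excising them at the start.

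Two points need repair. First, the central case: if $\sigma=\psi'\psi^{-1}$ is central and non-trivial (so $\Sigma\in\{\Sigma_{1,1},\Sigma_{1,2},\Sigma_{2,0}\}$), then $\CR\sigma=\CR$ and the overlap $\CR\psi\cap\CR\psi'$ is all of $\CR\psi$; your step (iii) cannot make it vanish, and trying to do so is circular since $\lim_n m^{\CR\psi}_{\gamma_0,L_n}=\psi_*\mathfrak m$ is precisely the measure being bounded. Note that Theorem \ref{thm-maher} explicitly excludes central elements. The paper's disjointness argument is immune: $\CI_k\phi\cap\CI_k\phi'=\emptyset$ only needs $\phi'\phi^{-1}\neq\mathrm{id}$ of word length $<k$, central or not. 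You would have to patch this, say by summing over coset representatives of the center and using that central elements act trivially on $\CM\CL(\Sigma)$, which supports $\mathfrak m$; done naively this loses a factor of the order of the center in the final inequality, so some care is required. Second, a smaller point in step (iii): being ``supported on a Thurston-null set'' does not by itself force a measure to vanish --- you must combine it with the absolute continuity with respect to $\mathfrak m_{\Thu}$ coming from (a), and the identification of the support via Proposition \ref{kor silly kor} requires first restricting to uniquely ergodic limits (full measure by Masur). Both ingredients are in your write-up, but they need to be assembled explicitly at that point.
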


  We start by proving that the family of measures in Proposition \ref{lem key lem
  counting} is precompact and that any limit must be uniformly continuous with respect to $\mathfrak{m}_{\Thu}$. 

  \begin{lemma}\label{lem-measures precompact}
    The family of measures $\big( m^\CR_{\gamma_0,L} \big)_{L\ge 1}$ is
    precompact with respect to the weak-*-topology on the space Radon
    measures on $\CC(\Sigma)$. Moreover, any accumulation point is
    absolutely continuous with respect to the Thurston measure.
  \end{lemma}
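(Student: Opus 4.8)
The plan is to establish precompactness by dominating each $m^\CR_{\gamma_0,L}$ by the ``full'' counting measure $m^{\Map}_{\gamma_0,L}=\frac 1{L^{6g-6+2r}}\sum_{\phi\in\Map(\Sigma)}\delta_{\frac 1L\phi(\gamma_0)}$, and then reading off the two conclusions from properties of the limiting Thurston measure. Since $\CR\subset\Map(\Sigma)$, we have $m^\CR_{\gamma_0,L}\le m^{\Map}_{\gamma_0,L}$ as measures on $\CC(\Sigma)$ for every $L$. By \eqref{eq-maryam limit} the family $\big(m^{\Map}_{\gamma_0,L}\big)_{L\ge 1}$ converges, hence is bounded on every compact subset of $\CC(\Sigma)$; therefore so is $\big(m^\CR_{\gamma_0,L}\big)_{L\ge 1}$. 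Because $\CC(\Sigma)$ is locally compact and second countable (indeed metrizable, as recorded in the list of properties), a uniformly locally bounded family of Radon measures is precompact in the weak-$*$ topology by the usual Banach--Alaoglu/Riesz argument. This gives the first assertion.

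For the second assertion, let $L_n\to\infty$ be a sequence along which $\mathfrak m=\lim_n m^\CR_{\gamma_0,L_n}$ exists. Passing to a further subsequence we may assume $m^{\Map}_{\gamma_0,L_n}$ also converges, necessarily to $C\cdot\mathfrak m_{\Thu}$ by \eqref{eq-maryam limit}. The inequality $m^\CR_{\gamma_0,L_n}\le m^{\Map}_{\gamma_0,L_n}$ passes to the limit: for any nonnegative continuous compactly supported $f$ on $\CC(\Sigma)$ one has $\int f\, d\mathfrak m=\lim_n\int f\,dm^\CR_{\gamma_0,L_n}\le\lim_n\int f\,dm^{\Map}_{\gamma_0,L_n}=C\int f\,d\mathfrak m_{\Thu}$, so $\mathfrak m\le C\cdot\mathfrak m_{\Thu}$ as Radon measures. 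In particular $\mathfrak m$ is absolutely continuous with respect to $\mathfrak m_{\Thu}$ (with Radon--Nikodym derivative bounded by $C$), and in particular it is supported on $\CM\CL(\Sigma)$.

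I do not expect any serious obstacle here: the lemma is essentially a soft consequence of the already-established convergence \eqref{eq-maryam limit} together with the trivial domination $\CR\subset\Map(\Sigma)$. The only points requiring a little care are (i) making sure the weak-$*$ precompactness statement is invoked in the correct form for Radon measures on a locally compact metrizable space — one wants uniform local boundedness, which the comparison measure provides — and (ii) checking that the pointwise inequality of measures is preserved under weak-$*$ limits, which is immediate by testing against nonnegative compactly supported continuous functions. The genuine content of Proposition \ref{lem key lem counting} — namely that $\sum_{\phi}\phi_*\mathfrak m\le C\cdot\mathfrak m_{\Thu}$, which rules out concentration and will ultimately force $\mathfrak m=0$ via Masur's ergodicity and the analysis of accumulation points of thickened centralizers — lies beyond this lemma and is handled in the subsequent arguments.
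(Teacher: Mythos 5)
Your proposal is correct and is essentially identical to the paper's proof: both dominate $m^\CR_{\gamma_0,L}$ by the full orbit-counting measure, deduce precompactness from the boundedness supplied by \eqref{eq-maryam limit}, and pass the inequality to the limit to conclude that any accumulation point is bounded by $C\cdot\mathfrak m_{\Thu}$ and hence absolutely continuous with respect to it.
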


  \begin{proof}
    The measure $m^\CR_{\gamma_0,L}$ is bounded from above for all $L$ by
    the measure 
    \begin{equation}\label{eq-original}
      m_{\gamma_0,L}=\frac
      1{L^{6g-6+2r}}\sum_{\phi\in\Map(\Sigma)}\delta_{\frac
      1L\phi(\gamma_0)}.
    \end{equation}
    From the existence of the limit \eqref{eq-maryam limit} we get
    \begin{equation}\label{eq-I am hungry}
      \limsup\int fd m^\CR_{\gamma_0,L}\le\limsup\int fd
      m_{\gamma_0,L}=C\cdot\int fd\mathfrak{m}_{\Thu}<\infty
    \end{equation}
    for every continuous function $f:\CC(\Sigma)\to\BR$ with compact
    support. This implies that the family $( m_{\gamma_0,L}^\CR)_{L\ge 1}$
    is bounded and thus precompact in the weak-*-topology. Moreover, \eqref{eq-I am hungry}
    implies that any accumulation point of $m^\CR_{\gamma_0,L}$ is bounded
    from above by $C\cdot\mathfrak{m}_{\Thu}$ and hence is absolutely
    continuous to the Thurston measure, as we had claimed.
  \end{proof}

  Note that the same argument also proves that both families
  $$m^{\CI_k}_{\gamma_0,L}=\frac
  1{L^{6g-6+2r}}\sum_{\phi\in\CI_k}\delta_{\frac 1L\phi(\gamma_0)} \quad
  \text{and} \quad m^{\calD_k}_{\gamma_0,L}=\frac
  1{L^{6g-6+2r}}\sum_{\phi\in\calD_k}\delta_{\frac 1L\phi(\gamma_0)}$$ are
  precompact and that any limit when $L\to\infty$ is absolutely continuous
  with respect to the Thurston measure. Here $\CI_k$ and $\calD_k$ are, as
  before, the subsets of $\CR$ consisting of $k$-isolated points and
  $k$-dense points, respectively.

  We can from now on fix a sequence $(L_n)$ with $L_n\to\infty$ such that
  the following limits all exist:
  \begin{equation}\label{eq-running out of names 1}
  \begin{split}
    \mathfrak{m}_{\gamma_0}&=\lim_{n\to\infty}m^\CR_{\gamma_0,L_n},\\
    \mathfrak{m}^{\CI_k}_{\gamma_0}&=\lim_{n\to\infty}m^{\CI_k}_{\gamma_0,L_n},\text{
      and }\\
    \mathfrak{m}^{\calD_k}_{\gamma_0}&=\lim_{n\to\infty}m^{\calD_k}_{\gamma_0,L_n}.
  \end{split}
  \end{equation}
  Since $\CR$ is the disjoint union of $\CI_k$ and $\calD_k$ they
  automatically satisfy that
  $$\mathfrak{m}_{\gamma_0} = \mathfrak{m}_{\gamma_0}^{\CI_k} +
  \mathfrak{m}_{\gamma_0}^{\calD_k}.$$
  Our next goal is to prove that the second of these limits is $0$:

  \begin{lemma}\label{lem-k-discrete measure 0}
  We have $\mathfrak{m}_{\gamma_0}^{\calD_k}=0$.
  \end{lemma}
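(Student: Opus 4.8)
The plan is to show that the measure $\mathfrak{m}_{\gamma_0}^{\calD_k}$, which we already know (by the remark following Lemma \ref{lem-measures precompact}) is absolutely continuous with respect to the Thurston measure $\mathfrak{m}_{\Thu}$, is in fact supported on a set of vanishing Thurston measure, forcing it to be zero. Recall from Theorem \ref{thm-maher} that $\calD_k$ is contained in a finite union $\bigcup_{\phi\in\CF}\CN_{\rel}(C(\phi),L)$ of thickened centralizers of non-central mapping classes. Since a finite sum of measures vanishes iff each summand does, it suffices to bound, for each fixed non-central $\phi\in\CF$, the contribution of $\CN_{\rel}(C(\phi),L)$; that is, it suffices to show that any weak-$*$ limit of $\frac1{L_n^{6g-6+2r}}\sum_{\psi\in\CN_{\rel}(C(\phi),L)}\delta_{\frac1{L_n}\psi(\gamma_0)}$ is the zero measure. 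By the same precompactness argument as in Lemma \ref{lem-measures precompact} (domination by $m_{\gamma_0,L}$), this limit exists along a subsequence and is absolutely continuous with respect to $\mathfrak{m}_{\Thu}$.

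The key step is to identify where such a limit is supported. First I would observe that $\mathfrak{m}_{\Thu}$ gives full measure to the set $\CU\CE$ of uniquely ergodic measured laminations: this is standard (the non-uniquely-ergodic laminations form a set of measure zero — e.g.\ by Masur's theorem, or by the fact that they lie in a countable union of positive-codimension subsets). So it is enough to show the limit measure assigns zero mass to $\CU\CE$, equivalently that its support, intersected with $\CU\CE$, is $\mathfrak{m}_{\Thu}$-null. Now suppose $\lambda\in\CU\CE$ lies in the support of this limit. Then there is a sequence of pairwise distinct $\phi_n\in\CN_{\rel}(C(\phi),L)$ with $\frac1{L_n}\phi_n(\gamma_0)\to\lambda$ projectively; by passing to a subsequence we may assume $\phi_n(\gamma_0)$ converges projectively to $\lambda$. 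Proposition \ref{kor silly kor} then applies and tells us that $\phi(\lambda)$ is a multiple of $\lambda$, i.e.\ $\lambda\in\Fix(\phi)$ in $\RP\CC(\Sigma)$. Hence the support of the limit measure, within $\CU\CE$, is contained in the fixed-point set of the non-central element $\phi$ acting on $\CM\CL(\Sigma)$.

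To finish, I would invoke the \emph{almost freeness} of the action $\Map(\Sigma)\actson(\CM\CL(\Sigma),\mathfrak{m}_{\Thu})$ stated in Section 4: the fixed point set of any non-central mapping class has vanishing Thurston measure (it lies in a lower-dimensional subset). Therefore the limit measure, being absolutely continuous with respect to $\mathfrak{m}_{\Thu}$ and supported (on the conull set $\CU\CE$) inside a $\mathfrak{m}_{\Thu}$-null set, must be the zero measure. Summing over the finitely many $\phi\in\CF$ gives $\mathfrak{m}_{\gamma_0}^{\calD_k}=0$.

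The main obstacle I anticipate is the careful handling of the passage between the measure-theoretic statement and the support/accumulation-point statement: one must be sure that a point $\lambda$ outside the support of the limit measure contributes nothing, and that for a point $\lambda$ in the support one can genuinely extract a sequence of \emph{distinct} $\phi_n$ with $\frac1{L_n}\phi_n(\gamma_0)$ converging to $\lambda$ — this uses that the normalization $1/L_n\to0$ forces the weighted currents $\frac1{L_n}\phi_n(\gamma_0)$ to escape every bounded neighborhood of a fixed multicurve unless infinitely many distinct $\phi_n$ appear, together with local compactness of $\CC(\Sigma)$ so that mass near $\lambda$ really is carried by such sequences. Once that bookkeeping is in place, Proposition \ref{kor silly kor} and almost freeness do the rest.
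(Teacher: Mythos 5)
Your proposal is correct and follows essentially the same route as the paper: reduce via Maher's theorem to the thickened centralizers $\CN_{\rel}(C(\phi),L)$, use domination by $m_{\gamma_0,L}$ to get absolute continuity with respect to $\mathfrak{m}_{\Thu}$, observe that the limit is supported on uniquely ergodic laminations and hence, by Proposition \ref{kor silly kor}, on the projective fixed set of the non-central $\phi$, which is $\mathfrak{m}_{\Thu}$-null. Your extra care about extracting pairwise distinct $\phi_n$ in the support argument is a point the paper glosses over, but the argument is the same.
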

  \begin{proof}
    By Maher's Theorem \ref{thm-maher} it is enough to prove that, for any
    non-central $\phi_0\in\Map(\Sigma)$ and any $R\ge 0$, the trivial
    measure is the only accumulation point when $L\to\infty$ of the family
    of measures

    $$m_{\gamma_0,L}^{\CN}=\frac
    1{L^{6g-6+2r}}\sum_{\phi\in\CN_{\rel}(C(\phi_0),R)}\delta_{\frac
    1L\phi(\gamma_0)}.$$
    Well, each $m_{\gamma_0,L}^{\CN}$ is bounded by the measure
    $m_{\gamma_0, L}$ given by \eqref{eq-original} and hence any such
    accumulation point
    $\mathfrak{m}'=\lim_{n\to\infty}m_{\gamma_0,L_n}^{\CN}$ is bounded by
    $C\cdot\mathfrak{m}_{\Thu}$ by \eqref{eq-maryam limit}. The claim will
    then follow when we say that the support of $\mathfrak{m}'$ is
    contained in a set of vanishing Thurston measure.

    First, the support of the limiting measure $\mathfrak{m}'$ is contained
    in the set of accumulation points of sequences $(x_n)$ where $x_n$ is
    in the support of $m_{\gamma_0,L_n}^{\CN}$, that is, a multiple of
    $\phi_n(\gamma_0)$ for some $\phi_n\in\CN_{\rel}(C(\phi_0),R)$. On
    the other hand, since the set of uniquely ergodic lamination has full
    $\mathfrak{m}_{\Thu}$-measure \cite{Masur-uniquely ergodic}, we also
    get that $\mathfrak{m}'$ is supported by uniquely ergodic laminations.
    It thus follows from Proposition \ref{kor silly kor} that
    $\mathfrak{m}'$ is supported by the set of measured laminations
    projectively fixed by $\phi_0$. Since this set has vanishing
    $\mathfrak{m}_{\Thu}$-measure we get that $\mathfrak{m}'$ is trivial,
    as we needed to prove.
  \end{proof}

  As a final step towards the proof of Proposition \ref{lem key lem
  counting} we establish an equivariance property for the limits of the
  measures $m_{\gamma_0,L}^\CR$:

  \begin{lemma}\label{lem-equivariance}
    We have
    $\phi_*(\mathfrak{m}_{\gamma_0})=\lim_{n\to\infty}m^\CR_{\phi(\gamma_0),L_n}$
    for all $\phi\in\Map(\Sigma)$.
  \end{lemma}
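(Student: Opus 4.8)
The plan is to unwind the definition of the measure $m^{\CR}_{\gamma_0,L}$ and apply the mapping class $\phi$ directly to the currents on which the Dirac masses are centered. Concretely, $m^\CR_{\gamma_0,L}=\frac{1}{L^{6g-6+2r}}\sum_{\psi\in\CR}\delta_{\frac 1L\psi(\gamma_0)}$, and since $\phi$ acts continuously and linearly on $\CC(\Sigma)$, push-forward of a Dirac measure behaves as $\phi_*\delta_x=\delta_{\phi(x)}$; moreover $\phi$ commutes with scaling, so $\phi_*\delta_{\frac 1L\psi(\gamma_0)}=\delta_{\frac 1L\phi\psi(\gamma_0)}$. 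Therefore
\begin{equation*}
  \phi_* m^\CR_{\gamma_0,L}=\frac{1}{L^{6g-6+2r}}\sum_{\psi\in\CR}\delta_{\frac 1L\phi\psi(\gamma_0)}.
\end{equation*}

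Next I would reindex this sum. The key observation is that $\CR$, being the set of non-pseudo-Anosov mapping classes, is invariant under left multiplication by any $\phi\in\Map(\Sigma)$: indeed $\psi$ is pseudo-Anosov if and only if $\phi\psi\phi^{-1}$ is, but more to the point here, the relevant reindexing is $\psi\mapsto\phi\psi$, which is a bijection of $\Map(\Sigma)$, and we must check it preserves $\CR$. Since $\CR$ is closed under conjugation and $\phi\psi = (\phi\psi\phi^{-1})\phi$, this is not quite the right phrasing; rather, I would note directly that $\CR$ is a union of conjugacy classes is not what is needed — what is needed is simply that whether $\phi\psi(\gamma_0)$ ranges over the same set. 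Let me instead reindex by $\psi'=\phi\psi$: as $\psi$ ranges over $\CR$, $\psi'$ ranges over $\phi\CR$. So
\begin{equation*}
  \phi_* m^\CR_{\gamma_0,L}=\frac{1}{L^{6g-6+2r}}\sum_{\psi'\in\phi\CR}\delta_{\frac 1L\psi'(\gamma_0)}.
\end{equation*}
Now $\phi\CR=\CR$ precisely because $\CR$ is a normal-type subset: the property of being non-pseudo-Anosov is a conjugacy invariant, hence $\CR$ is invariant under \emph{conjugation}; combined with the fact that it is closed under taking inverses and that $\phi\CR\phi^{-1}=\CR$, we get $\phi\CR = \CR\phi$, which is still not $\CR$. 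The cleanest route, which I would actually use, is: instead of reindexing inside $\CR$, reindex inside the expression for $m^\CR_{\phi(\gamma_0),L_n}$. We have $m^\CR_{\phi(\gamma_0),L}=\frac{1}{L^{6g-6+2r}}\sum_{\psi\in\CR}\delta_{\frac 1L\psi\phi(\gamma_0)}$, and reindexing $\psi\mapsto\psi\phi^{-1}$ (a bijection of $\CR$ onto $\CR\phi^{-1}$, hence onto $\CR$ if $\CR$ is right-translation invariant) — so in the end what I genuinely need is that $\CR\phi=\CR$ for... no. I will simply compare $\phi_* m^\CR_{\gamma_0,L}$ with $m^\CR_{\phi(\gamma_0),L}$: the former has Dirac masses at $\frac1L\phi\psi(\gamma_0)$, $\psi\in\CR$; the latter at $\frac1L\psi\phi(\gamma_0)$, $\psi\in\CR$. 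These need not agree termwise, but after the substitution $\psi\mapsto\phi\psi\phi^{-1}$ in the first sum (a bijection of $\CR$ since $\CR$ is conjugation-invariant), $\phi\psi(\gamma_0)$ becomes $\phi(\phi^{-1}\psi\phi)(\gamma_0)\cdot$ — carefully: replacing $\psi$ by $\phi^{-1}\psi\phi$ gives a Dirac mass at $\frac1L\phi\phi^{-1}\psi\phi(\gamma_0)=\frac1L\psi\phi(\gamma_0)$, which is exactly the $\psi$-th term of $m^\CR_{\phi(\gamma_0),L}$. Hence $\phi_* m^\CR_{\gamma_0,L}=m^\CR_{\phi(\gamma_0),L}$ for every finite $L$.

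Finally I would pass to the limit: since $\phi_*$ is continuous with respect to the weak-$*$ topology on Radon measures (because $\phi:\CC(\Sigma)\to\CC(\Sigma)$ is a homeomorphism, so $f\mapsto f\circ\phi$ preserves $C_c(\CC(\Sigma))$), applying $\phi_*$ to the convergence $m^\CR_{\gamma_0,L_n}\to\mathfrak m_{\gamma_0}$ yields $\phi_*\mathfrak m_{\gamma_0}=\lim_{n\to\infty}\phi_* m^\CR_{\gamma_0,L_n}=\lim_{n\to\infty}m^\CR_{\phi(\gamma_0),L_n}$, which is the claim. The only real content is the conjugation-invariance of $\CR$ used to reindex the finite sum; everything else is formal manipulation of Dirac measures and the continuity of push-forward. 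I expect no serious obstacle, but the bookkeeping of which translation/conjugation bijection to use is the one place to be careful.
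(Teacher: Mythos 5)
Your argument is correct and is essentially the paper's proof: the key point in both is that conjugation-invariance of $\CR$ lets one reindex the sum (the paper writes this as $\CR\phi=\phi\CR$, you as the bijection $\psi\mapsto\phi^{-1}\psi\phi$ of $\CR$), giving $\phi_*m^\CR_{\gamma_0,L}=m^\CR_{\phi(\gamma_0),L}$ for each $L$, after which weak-$*$ continuity of push-forward finishes the job. The several false starts about whether $\phi\CR=\CR$ should be cut from a final writeup, but the argument you settle on is the right one.
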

  \begin{proof}
    Noting that the set $\CR$ is closed under conjugation we get that
    $\CR\phi=\phi\CR$. This means that
    \begin{align*}
      m^\CR_{\phi(\gamma_0),L_n}
      &=\frac 1{L^{6g-6+2r}}\sum_{\psi\in\CR}\delta_{\frac
      1L\psi\phi(\gamma_0)}=\frac
      1{L^{6g-6+2r}}\sum_{\psi\in\CR\phi}\delta_{\frac 1L\psi(\gamma_0)}\\
      &=\frac 1{L^{6g-6+2r}}\sum_{\psi\in\phi\CR}\delta_{\frac
      1L\psi(\gamma_0)}=\frac 1{L^{6g-6+2r}}\sum_{\psi\in\CR}\delta_{\frac
      1L\phi\psi(\gamma_0)}\\
      &=\frac 1{L^{6g-6+2r}}\sum_{\psi\in\CR}\phi_*\left(\delta_{\frac
      1L\psi(\gamma_0)}\right)=\phi_*\left(\frac
      1{L^{6g-6+2r}}\sum_{\psi\in\CR}\delta_{\frac
      1L\psi(\gamma_0)}\right)\\
      &=\phi_*\left(m^\CR_{\gamma_0,L_n}\right)
    \end{align*}
    The claim follows now from \eqref{eq-running out of names 1} and the
    continuity of the action of $\Map(\Sigma)$ on the space of currents.
  \end{proof}

  We are ready to prove the proposition:

  \begin{proof}[Proof of Proposition \ref{lem key lem counting}]

    Recall that Lemma \ref{lem-measures precompact} asserts that the given
    family of measures is precompact and hence we can assume that we are
    given a sequence $(L_n)$ with $L_n\to\infty$ such that the limit 
    $$\mathfrak{m}_{\gamma_0}=\lim_{n\to\infty}m^\CR_{\gamma_0,L_n}$$
    exists. To prove Proposition \ref{lem key lem counting} it will suffice
    to show, with $C$ as in \eqref{eq-maryam limit}, that for every finite
    set $\CZ\subset\Map(\Sigma)$ we have
    $$\sum_{\phi\in\CZ}\phi_*(\mathfrak{m}_{\gamma_0})\le
    C\cdot\mathfrak{m}_{\Thu}.$$
    Fixing such a finite set $\CZ$ choose 
    $$k>2\cdot\max\{d_\CG(\text{id},\phi) \text{ where } \phi \in \CZ\}.$$
    Lemma \ref{lem-equivariance} and Lemma \ref{lem-k-discrete measure 0}
    imply, respectively, the first and last of the following equalities:

    $$\sum_{\phi\in\CZ}\phi_*(\mathfrak{m}_{\gamma_0}) =
    \sum_{\phi\in\CZ}\lim_{n\to\infty}m^\CR_{\phi(\gamma_0),L_n} =
    \lim_{n\to\infty}\sum_{\phi\in\CZ}m^\CR_{\phi(\gamma_0),L_n} =
    \lim_{n\to\infty}\sum_{\phi\in\CZ}m^{\CI_k}_{\phi(\gamma_0),L_n}.$$
    Moreover, from the choice of $k$ we get that
    $\CI_k\phi\cap\CI_k\phi'=\emptyset$ for any two distinct
    $\phi,\phi'\in\CZ$ and we can thus rewrite
    \begin{align*}
      \sum_{\phi\in\CZ}m^{\CI_k}_{\phi(\gamma_0),L_n} &=\frac
      1{L_n^{6g-6+2r}}\sum_{\phi\in\CZ}\sum_{\psi\in\CI_k}\delta_{\frac
      {1}{L_n} \psi\phi(\gamma_0)} =\frac
      1{L_n^{6g-6+2r}}\sum_{\phi\in\CZ}\sum_{\psi\in\CI_k\phi}\delta_{\frac
      {1}{L_n}\psi(\gamma_0)}\\ &=\frac
      1{{L_n}^{6g-6+2r}}\sum_{\psi \in \underset{\phi\in\CZ}{\bigcup}\CI_k\phi}\delta_{\frac
      {1}{L_n}\psi(\gamma_0)}.
    \end{align*}
    It thus follows that
    $$\sum_{\phi\in\CZ}m^{\CI_k}_{\phi(\gamma_0),L_n}\le\frac
    1{L_n^{6g-6+2r}}\sum_{\psi\in\Map(\Sigma)}\delta_{\frac
    {1}{L_n}\psi(\gamma_0)}$$ and hence that
    $$\sum_{\phi\in\CZ}\phi_*(\mathfrak{m}_{\gamma_0})\le\lim_{n\to\infty}\frac
    1{L_n^{6g-6+2r}}\sum_{\phi\in\Map(\Sigma)}\delta_{\frac
    {1}{L_n}\phi(\gamma_0)}=C\cdot\mathfrak{m}_{\Thu}.$$
    We are done.
  \end{proof}

\section{Proofs of the theorems}

  We are now ready to prove the main results. 

  \begin{named}{Theorem \ref{thm-counting curves}}
    Let $\CR \subset \Map(\Sigma)$ be the set of non pseudo-Anosov mapping
    classes and let $\gamma_0\subset\Sigma$ be a filling multicurve. Then
    we have $$\lim_{L\to\infty}\frac{\vert\{\phi\in\CR\text{ with
    }F(\phi(\gamma_0))\le L\}\vert}{L^{6g-6+2r}}=0$$ for every continuous
    homogenous function $F:\CC(\Sigma)\to\BR_{\ge 0}$ which, for every
    $A\subset\Sigma$ compact, is proper when restricted on the set
    $\CC_A(\Sigma)$ of currents supported by $A$.
  \end{named}

  \begin{proof}
    The claim will follow easily once we prove that
    \begin{equation}\label{eq-this is a pain}
    \lim_{L\to\infty}m_{\gamma_0,L}^\CR=0
    \end{equation}
    with $m_{\gamma_0,L}^\CR$ as in \eqref{eq-gato}. Since this family of
    measures is precompact by Proposition \ref{lem key lem counting}, it
    suffices to prove that $0$ is the only accumulation point when
    $L\to\infty$. So let $(L_n)$ be a sequence tending to $\infty$ and
    such that the limit
    $$\mathfrak{m}_{\gamma_0}=\lim_{n\to\infty}m_{\gamma_0,L_n}^\CR$$
    exists. By Lemma \ref{lem-measures precompact}
    $\mathfrak{m}_{\gamma_0}$ is absolutely continuous with
    respect to $\mathfrak{m}_{\Thu}$. This means that there is a function
    (the Radon-Nikodym derivative)
    $\kappa:\CC(\Sigma)\to\BR_{\ge 0}$ with the property that
    $$\int_{\CC(\Sigma)}
    f(\zeta)d\mathfrak{m}_{\gamma_0}(\zeta)=\int_{\CC(\Sigma)}
    f(\zeta)\cdot\kappa(\zeta)d\mathfrak{m}_{\Thu}(\zeta)$$ for any
    continuous compactly supported function $f$ on the space of currents.

    Proposition \ref{lem key lem counting} asserts that the measure
    $\sum_{\phi\in\Map(\Sigma)}\phi_*\mathfrak{m}_{\gamma_0}$ is not only
    finite, but actually bounded by a multiple $C\cdot\mathfrak{m}_{\Thu}$
    of the Thurston measure. In terms of the function $\kappa$, this
    implies that 
    \begin{equation}\label{eq almost done}
      \sum_{\phi\in\Map(\Sigma)}\kappa(\phi(\zeta))\le C\text{ for
      }\mathfrak{m}_{\Thu}\text{-almost every }\zeta\in\CC(\Sigma).
    \end{equation}
    We claim that this implies that $\kappa(\zeta)=0$ almost surely: 

    \begin{claim}
    $\kappa(\zeta)=0$ almost surely with respect to the Thurston measure.
    \end{claim}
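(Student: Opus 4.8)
The plan is to exploit the $\Map(\Sigma)$-ergodicity of the Thurston measure together with the summability bound \eqref{eq almost done}. First I would observe that, since $\mathfrak m_{\gamma_0}$ is absolutely continuous with respect to $\mathfrak m_{\Thu}$, the Radon--Nikodym derivative $\kappa$ is a well-defined $\mathfrak m_{\Thu}$-measurable function $\CC(\Sigma)\to\BR_{\ge 0}$, and it is supported (up to a null set) on $\CM\CL(\Sigma)$, where $\mathfrak m_{\Thu}$ lives. The key input is that $\Map(\Sigma)$ acts on $(\CM\CL(\Sigma),\mathfrak m_{\Thu})$ preserving the measure and ergodically, by Masur's theorem recalled earlier in this section.

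The main step is to produce a genuinely $\Map(\Sigma)$-invariant function out of $\kappa$ and then invoke ergodicity. The natural candidate is $K(\zeta)=\sum_{\phi\in\Map(\Sigma)}\kappa(\phi(\zeta))$. Since the action preserves $\mathfrak m_{\Thu}$, reindexing the sum shows $K(\psi(\zeta))=K(\zeta)$ for every $\psi\in\Map(\Sigma)$, so $K$ is an invariant measurable function; by ergodicity it is $\mathfrak m_{\Thu}$-almost everywhere equal to a constant $c\in[0,\infty]$. By \eqref{eq almost done} we have $c\le C<\infty$, so $K$ is finite almost everywhere. Next I would integrate: for any measurable set $A$ of finite positive Thurston measure that is a fundamental-domain-like piece, or more simply by the monotone convergence theorem applied term by term and invariance of $\mathfrak m_{\Thu}$, one gets
$$\int_{\CM\CL(\Sigma)}K\,d\mathfrak m_{\Thu}=\sum_{\phi\in\Map(\Sigma)}\int_{\CM\CL(\Sigma)}\kappa(\phi(\zeta))\,d\mathfrak m_{\Thu}(\zeta)=\sum_{\phi\in\Map(\Sigma)}\int_{\CM\CL(\Sigma)}\kappa\,d\mathfrak m_{\Thu}.$$
Because $\CM\CL(\Sigma)$ has infinite Thurston measure while $K\equiv c$ is a finite constant, the left-hand side is either $0$ (if $c=0$) or $+\infty$ (if $c>0$). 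The right-hand side is a sum of countably many copies of the single number $\int\kappa\,d\mathfrak m_{\Thu}=\mathfrak m_{\gamma_0}(\CC(\Sigma))$. If this number were positive the right-hand side would be $+\infty$, which is consistent, so this crude argument alone does not immediately finish; the honest way is to restrict to a set of finite measure.

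So, to make the argument airtight, I would instead fix a Borel set $A\subset\CM\CL(\Sigma)$ with $0<\mathfrak m_{\Thu}(A)<\infty$ (such $A$ exists since $\mathfrak m_{\Thu}$ is a nonzero Radon measure) and integrate \eqref{eq almost done} over $A$:
$$\sum_{\phi\in\Map(\Sigma)}\int_A\kappa(\phi(\zeta))\,d\mathfrak m_{\Thu}(\zeta)=\int_A K(\zeta)\,d\mathfrak m_{\Thu}(\zeta)=c\cdot\mathfrak m_{\Thu}(A)\le C\cdot\mathfrak m_{\Thu}(A)<\infty.$$
Hence $c<\infty$, confirming $K$ is an a.e.\ finite invariant function. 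Now suppose for contradiction that $\kappa$ is not almost surely $0$; then $\int_{\CM\CL(\Sigma)}\kappa\,d\mathfrak m_{\Thu}>0$. Each term $\int_{\CM\CL(\Sigma)}\kappa(\phi(\zeta))\,d\mathfrak m_{\Thu}(\zeta)$ equals this same positive constant by $\Map(\Sigma)$-invariance of $\mathfrak m_{\Thu}$, so $\int_{\CM\CL(\Sigma)}K\,d\mathfrak m_{\Thu}=\sum_{\phi\in\Map(\Sigma)}\int\kappa\,d\mathfrak m_{\Thu}=+\infty$ since $\Map(\Sigma)$ is infinite; but $\int K\,d\mathfrak m_{\Thu}=c\cdot\mathfrak m_{\Thu}(\CM\CL(\Sigma))$ with $c<\infty$, which forces $\mathfrak m_{\Thu}(\CM\CL(\Sigma))=\infty$ and thus gives no contradiction directly. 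To close this last gap cleanly I would argue on the finite-measure set $A$: the sets $\{\phi(A)\}_{\phi\in\Map(\Sigma)}$ would each carry Thurston-mass at least $\int_A\kappa\,d\mathfrak m_{\Thu}>0$ of the finite measure $\kappa\,d\mathfrak m_{\Thu}=d\mathfrak m_{\gamma_0}$, but by \eqref{eq almost done} at $\mathfrak m_{\Thu}$-a.e.\ point at most finitely many of the countably many translates $\phi(A)$ can contribute, contradicting that infinitely many disjoint-in-measure translates each carry a fixed positive mass.

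The step I expect to be the genuine obstacle is this very last one: turning the pointwise bound \eqref{eq almost done} into a contradiction with positivity of $\int\kappa\,d\mathfrak m_{\Thu}$ requires a careful application of ergodicity rather than a naive integration, precisely because $\mathfrak m_{\Thu}$ is infinite on $\CM\CL(\Sigma)$. The clean formulation is: $K$ is invariant, measurable, and a.e.\ finite, hence a.e.\ equal to a constant $c<\infty$ by ergodicity; then $\int_{\CM\CL(\Sigma)}\kappa\,d\mathfrak m_{\Thu}=\int K\,d\mathfrak m_{\Thu}$ is impossible to be both positive and finite while simultaneously $\int K\,d\mathfrak m_{\Thu}=\sum_\phi\int\kappa\,d\mathfrak m_{\Thu}$ with infinitely many equal positive summands — unless $\int\kappa\,d\mathfrak m_{\Thu}=0$, i.e.\ $\kappa\equiv0$ almost surely, which is exactly the Claim.
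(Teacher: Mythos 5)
Your opening moves are sound: the function $K(\zeta)=\sum_{\phi\in\Map(\Sigma)}\kappa(\phi(\zeta))$ is measurable and $\Map(\Sigma)$-invariant, so Masur's ergodicity makes it $\mathfrak{m}_{\Thu}$-a.e.\ equal to a constant $c$, and \eqref{eq almost done} gives $c\le C$. If $c=0$ you are done, since $\kappa\le K$ pointwise. The whole difficulty is to rule out $0<c\le C$ together with $\kappa\not\equiv 0$, and it is exactly there that your argument breaks. The step ``the sets $\phi(A)$ each carry mass at least $\int_A\kappa\,d\mathfrak{m}_{\Thu}>0$ of the finite measure $\mathfrak{m}_{\gamma_0}$'' is false: $\mathfrak{m}_{\gamma_0}=\kappa\,\mathfrak{m}_{\Thu}$ is \emph{not} $\Map(\Sigma)$-invariant (only $\mathfrak{m}_{\Thu}$ is), so $\mathfrak{m}_{\gamma_0}(\phi(A))=\int_A\kappa\circ\phi\,d\mathfrak{m}_{\Thu}$ has no reason to be comparable to $\mathfrak{m}_{\gamma_0}(A)=\int_A\kappa\,d\mathfrak{m}_{\Thu}$. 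Indeed your own identity $\sum_\phi\mathfrak{m}_{\gamma_0}(\phi(A))=c\cdot\mathfrak{m}_{\Thu}(A)<\infty$ forces these masses to be summable, hence to tend to $0$ along any enumeration of the group; it is just a restatement of the inequality $\sum_\phi\phi_*\mathfrak{m}_{\gamma_0}\le C\cdot\mathfrak{m}_{\Thu}$ of Proposition \ref{lem key lem counting} and yields no contradiction by itself.

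The missing ingredient is recurrence (conservativity) of the action, and ergodicity alone does not supply it for an infinite measure: the translation action of $\BZ$ on itself with counting measure is ergodic and measure preserving, yet $\kappa=\mathbf{1}_{\{0\}}$ satisfies $\sum_n\kappa(\zeta+n)\equiv 1$ with $\kappa\not\equiv 0$. What excludes this behaviour here is that $\mathfrak{m}_{\Thu}$ is non-atomic and the action is almost free, and this is precisely how the paper argues: if $U=\{\kappa\ge\epsilon\}$ had positive measure, then \eqref{eq almost done} would bound by $C/\epsilon$ the number of visits of a.e.\ orbit to $U$, from which one extracts a positive-measure subset $W\subset U$ with $W\cap\phi(W)=\emptyset$ for every non-central $\phi$; splitting such a wandering set into two pieces of positive measure produces a non-trivial invariant set, contradicting ergodicity. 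You need to add this recurrence step (or an equivalent appeal to the Hopf decomposition for conservative versus dissipative actions); without it the proof does not close.
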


    In a nutshell, the claim follows from the fact that ergodic actions of
    discrete groups on non-atomic measure spaces are recurrent (the
    condition on the measure being non-atomic is just there to rule out 
    actions with only one orbit). In any case, we give a direct
    argument to prove the claim:

    \begin{proof}[Proof of the Claim]
      If the claim fails to be true, then there is a positive
      $\mathfrak{m}_{\Thu}$-measure set $U\subset\CC(\Sigma)$ with
      $\kappa(\zeta)\ge\epsilon>0$ for every $\zeta\in U$. Noting that the
      action $\Map(\Sigma)\actson\CC(\Sigma)$ is almost free we get from
      \eqref{eq almost done} that, for almost every $\zeta\in U$,
      $$\#\{\phi\in\Map(\Sigma)\text{ with }\phi(\zeta)\in U\}\le\frac
      C\epsilon.$$
      It follows that there is a set $V\subset U$ of positive Thurston measure such that
      the set $$\CZ=\{\phi\in\Map(\Sigma)\text{ with }\phi(V)\cap
      U\neq\emptyset\}$$ is finite. Now, since the action is essentially
      free we can in fact find $W\subset V$ of positive Thurston  measure
      with $W\cap\phi(W)=\emptyset$ for all $\phi\notin C(\Map(\Sigma))$.
      This contradicts the ergodicity of the action of the mapping class
      group on $(\CM\CL(\Sigma),\mathfrak{m}_{\Thu})$.
  \end{proof}

  The claim implies that the limiting measure vanishes, that is
  $\mathfrak{m}_{\gamma_0}=0$, establishing \eqref{eq-this is a pain}.
  We can now conclude the proof: let
  $F:\CC(\Sigma)\to\BR_{\ge 0}$ be as in the statement and note that
  \begin{align*}
    \frac{\vert\{\gamma\in\CR\cdot\gamma_0\text{ with }F(\gamma)\le
    L\}\vert}{L^{6g-6+2r}} &\le\frac{\vert\{\phi\in\CR\text{ with }F \left(
    \frac 1L\phi(\gamma_0) \right)\le 1\}\vert}{L^{6g-6+2r}}\\
    &=m^{\CR}_{\gamma_0,L}(\{F(\cdot)\le 1\})
  \end{align*}
  and by \eqref{eq-this is a pain} together with the fact that 
  $\{F(\cdot)\le 1\}$ is compact we have that
  $$\lim_{L\to\infty}m^{\CR}_{\gamma_0,L}(\{F(\cdot)\le 1\})=0. \qedhere$$
  \end{proof}
 
  Finally, we prove Theorem \ref{thm-distances}. Recall that for $f \in
  \Diff(\Sigma)$, $K(f)$ and $\Lip(f)$ are respectively the quasi-conformal
  distortion and Lipschitz constant of $f$.

  \begin{named}{Theorem \ref{thm-distances}}
    The set of pseudo-Anosov mapping classes is generic with respect to any
    one of the functions:
    \begin{enumerate}
      \item $\rho_{K}(\phi)=\inf\{K(f)\text{ where }f\in\Diff(\Sigma)\text{
      represents }\phi\}$. 
      \item $\rho_{\Lip}(\phi)=\inf\{\Lip(f)\text{ where }f\in\Diff(\Sigma)\text{
      represents }\phi\}$.
      \item $\rho_{\sigma,\eta}(\phi)=\iota(\phi(\sigma),\eta)$, where
        $\sigma$ and $\eta$ are filling multicurves. 
    \end{enumerate}
  \end{named}

  \begin{proof}
    We start by proving that the set of pseudo-Anosov mapping classes is
    $\rho_{\sigma,\eta}$-generic for filling multicurves $\sigma$ and
    $\eta$. Well, the function
    $$\CC(\Sigma)\to\BR_{\ge 0}, \ \lambda\mapsto\iota(\lambda,\eta)$$
    is continuous and proper on the set $\CC_A(\Sigma)$ of currents
    supported by compact sets $A\subset\Sigma$. We thus get from Theorem
    \ref{thm-counting curves} that
    \begin{equation}\label{eq-running out of names}
      \lim_{L\to\infty}\frac{\vert\{\phi\in\CR\text{ with
      }\iota(\phi(\sigma),\eta)\le L\}\vert}{L^{6g-6+2r}}=0
    \end{equation}
    On the other hand we get from \cite{Sapir} or \cite{Maryam2} (see also
    \cite{ES-announcement,Book}) that 
    \begin{equation}\label{eq-running out of names2}
      \liminf_{L\to\infty}\frac{\vert\{\phi\in\Map(\Sigma)\text{ with
      }\iota(\phi(\sigma),\eta)\le
      L\}\vert}{L^{6g-6+2r}}=\const(\sigma,\eta)>0.
    \end{equation}
    Since $\rho_{\sigma,\eta}(\phi)=\iota(\phi(\sigma),\eta)$ we get from
    \eqref{eq-running out of names} and \eqref{eq-running out of names2}
    that
    $$\lim_{L\to\infty}\frac{\vert\{\phi\in\CR\text{ with
    }\rho_{\sigma,\eta}(\phi)\le L\}\vert}{\vert\{\phi\in\Map(\Sigma)\text{
      with }\rho_{\sigma,\eta}(\phi)\le L\}\vert}=0.$$
    This shows the set of pseudo-Anosov mapping classes is generic with
    respect to $\rho_{\sigma,\eta}$.

    We consider now genericity with respect to $\rho_{\Lip}$. Fix once and for all 
    a filling multicurve $\sigma$. Although it
    does not really matter, we could for example assume that $\sigma$ is a
    marking in the sense of \cite{Masur-Minsky CC2}. We need the following fact:

    \begin{fact}\label{fact1}
      There is $C=C(\Sigma,\sigma)\ge 1$ with 
      $$\frac 1C\cdot\rho_{\Lip}(\phi)\le\rho_{\sigma,\sigma}(\phi)\le
      C\cdot\rho_{\Lip}(\phi)$$
      for all $\phi\in\Map(\Sigma)$.
    \end{fact}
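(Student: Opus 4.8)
The plan is to bound each of $\rho_{\Lip}(\phi)$ and $\rho_{\sigma,\sigma}(\phi)$ by the other, up to a multiplicative constant depending only on $\Sigma$ and $\sigma$. The key observation is that a $\sigma$-filling multicurve determines the hyperbolic metric coarsely, so intersection numbers of $\phi(\sigma)$ with $\sigma$ control, and are controlled by, the geometric distortion of any diffeomorphism representing $\phi$.

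For the inequality $\rho_{\sigma,\sigma}(\phi)\le C\cdot\rho_{\Lip}(\phi)$, I would take any $f\in\Diff(\Sigma)$ representing $\phi$ with $\Lip(f)$ close to $\rho_{\Lip}(\phi)$, and estimate $\iota(\phi(\sigma),\sigma)=\iota(f(\sigma),\sigma)$ directly. Realizing $\sigma$ by its geodesic representative of total length $\ell_\Sigma(\sigma)$, the image $f(\sigma)$ is a multicurve of total length at most $\Lip(f)\cdot\ell_\Sigma(\sigma)$; since a curve of length $T$ in a fixed hyperbolic surface can intersect the fixed multicurve $\sigma$ at most $\const(\Sigma,\sigma)\cdot(T+1)$ times (this is a standard collar/area estimate: each intersection forces a definite amount of length transverse to $\sigma$), we get $\iota(f(\sigma),\sigma)\le C\cdot\Lip(f)$ after absorbing the additive term using $\Lip(f)\ge 1$. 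Taking the infimum over $f$ gives the right-hand inequality.

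For the reverse inequality $\tfrac1C\cdot\rho_{\Lip}(\phi)\le\rho_{\sigma,\sigma}(\phi)$, the point is to produce, from the combinatorial data $\iota(\phi(\sigma),\sigma)$, an actual diffeomorphism representing $\phi$ whose Lipschitz constant is controlled. Here I would invoke the fact that $\sigma$ is filling (or a marking), so that the hyperbolic length $\ell_\Sigma(\phi(\sigma))$ is bi-Lipschitz-comparable to $\iota(\phi(\sigma),\sigma)$ — more precisely, length is bounded above by a linear function of intersection with a filling multicurve (via the systole, since $\sigma$ being filling gives a positive lower bound $\iota(\mu,\sigma)\ge c\cdot\ell_\Sigma(\mu)$ for all currents $\mu$ after the standard continuity/compactness argument on projectivized currents, e.g.\ using property (7) of $\CC_A(\Sigma)$). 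So $\ell_\Sigma(\phi(\sigma))\le C\cdot\rho_{\sigma,\sigma}(\phi)$. Now I would use a quantitative version of the change-of-marking / change-of-pants-decomposition construction: there is a diffeomorphism $f$ with $f(\sigma)=\phi(\sigma)$ (as marked multicurves, so $f$ represents $\phi$) and with $\Lip(f)$ bounded by a fixed function of $\ell_\Sigma(\phi(\sigma))$; one builds $f$ by first mapping $\sigma$ to $\phi(\sigma)$ with controlled distortion and then extending over the complementary pieces, each of which is a polygon whose side lengths are controlled, hence admits a controlled-Lipschitz filling. Combining gives $\rho_{\Lip}(\phi)\le\Lip(f)\le C\cdot\ell_\Sigma(\phi(\sigma))\le C'\cdot\rho_{\sigma,\sigma}(\phi)$.

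The main obstacle is the reverse inequality — specifically, extending a controlled map between the two multicurves to a controlled diffeomorphism of the whole surface. The first inequality is a soft length/area count, but the second requires genuinely constructing a bounded-distortion homeomorphism from combinatorial intersection data, which is where one must be careful that the complementary regions and the gluing annuli around $\sigma$ do not introduce uncontrolled distortion. I expect this to reduce, after cutting along $\sigma$, to the statement that a hyperbolic polygon (or once-punctured polygon) with prescribed vertex and side-length data of bounded complexity admits a diffeomorphism to a fixed model with Lipschitz constant bounded in terms of that data — a standard but somewhat technical piece of two-dimensional geometry that one could either prove by hand or cite from the literature on quasiconformal/bi-Lipschitz models of Teichm\"uller space.
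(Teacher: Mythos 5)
Your first reduction coincides with the paper's: both arguments split the statement into (a) the two--sided comparison $\iota(\phi(\sigma),\sigma)\asymp\ell_\Sigma(\phi(\sigma))$, which follows from compactness of the projectivized currents supported in a fixed compact set (properties (7) and (8)), and (b) the two--sided comparison $\ell_\Sigma(\phi(\sigma))\asymp\rho_{\Lip}(\phi)$. Your proof of the easy half of (b), pushing the geodesic representative of $\sigma$ forward by a near--optimal $f$ and bounding intersections of a curve of length $T$ with $\sigma$ by $\const\cdot(T+1)$, is fine (and $\Lip(f)\ge 1$ since a homeomorphism cannot decrease area, so the additive term is indeed absorbable). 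The divergence is in the hard half of (b), namely $\rho_{\Lip}(\phi)\le C\cdot\ell_\Sigma(\phi(\sigma))$: the paper obtains this by citing Theorem 4.1 of Lenzhen--Rafi--Tao for a short marking, using that the whole $\Map(\Sigma)$--orbit of the base structure stays in the thick part, and separately sketches a proof via Teichm\"uller's theorem and compactness of the space of unit--area quadratic differentials. You propose instead to build a representative of $\phi$ by mapping $\sigma$ to $\phi(\sigma)$ and extending over the complementary regions.

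That route is viable, but as written it does not close the argument: the deferred extension lemma --- that each complementary polygon of $\phi(\sigma)$, whose perimeter can be of order $\ell_\Sigma(\phi(\sigma))$ and which is embedded but of uncontrolled intrinsic shape (and possibly once--punctured), admits a parametrization by the corresponding fixed polygon of $\sigma$ with Lipschitz constant $O(\ell_\Sigma(\phi(\sigma)))$ and prescribed boundary behaviour --- is exactly where the entire content of the fact sits, so your proposal trades the paper's citation for a different unproved black box. (A smaller point: to conclude that $f$ represents $\phi$ you need $f$ to agree with $\phi$ on the whole cell structure induced by the filling multicurve $\sigma$, not merely to send $\sigma$ to $\phi(\sigma)$; this is arrangeable but should be said.) If you want to avoid both the citation and the map--building, there is a softer way to finish in the spirit of your step (a): Thurston's stretch--map theory gives $\rho_{\Lip}(\phi)=\sup_\alpha\ell_\Sigma(\phi(\alpha))/\ell_\Sigma(\alpha)$ over simple closed curves $\alpha$; the elementary estimate $\ell_\Sigma(\beta)\le C\,\iota(\beta,\eta)\,\ell_\Sigma(\eta)$ for any filling multicurve $\eta$ (consecutive intersections of a geodesic with $\eta$ cut it into arcs, each contained in a complementary disk and hence of length at most a bounded multiple of the perimeter of that disk), applied with $\beta=\phi(\alpha)$ and $\eta=\phi(\sigma)$, combined with $\ell_\Sigma(\alpha)\asymp\iota(\alpha,\sigma)=\iota(\phi(\alpha),\phi(\sigma))$, yields $\ell_\Sigma(\phi(\alpha))/\ell_\Sigma(\alpha)\le C'\,\ell_\Sigma(\phi(\sigma))$ uniformly in $\alpha$, which is the missing bound.
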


    Fact \ref{fact1} is well known but, for the convenience of the reader,
    we will comment on its proof once we are done with Theorem
    \ref{thm-distances}. From Fact \ref{fact1} we get that
    \begin{align*}
      \vert\{\phi\in\CR\text{ with }\rho_{\Lip}(\phi)\le
      L\}\vert&\le\vert\{\phi\in\CR\text{ with
      }\rho_{\sigma,\sigma}(\phi)\le C L\}\vert\\
      \vert\{\phi\in\Map(\Sigma)\text{ with }\rho_{\Lip}(\phi)\le
      L\}\vert&\ge \left\vert \Big\{\phi\in\Map(\Sigma)\text{ with
      }\rho_{\sigma,\sigma}(\phi)\le \frac LC \Big\} \right\vert.
    \end{align*}
    We thus get from \eqref{eq-running out of names} and \eqref{eq-running
    out of names2} that 
    $$\lim_{L\to\infty}\frac{\vert\{\phi\in\CR\text{ with
    }\rho_{\Lip}(\phi)\le L\}\vert}{\vert\{\phi\in\Map(\Sigma)\text{ with
    }\rho_{\Lip}(\phi)\le L\}\vert}=0,$$
    as we had claimed.

    The genericity with respect to $\rho_K$ follows by the
    same argument when we replace Fact \ref{fact1} by the following also well-known
    fact:

    \begin{fact}\label{fact2}
      There is $C=C(\Sigma)\ge 1$ with 
      $$\frac 1C\cdot\rho_{\Lip}(\phi)^2\le\rho_K(\phi)\le
      C\cdot\rho_{\Lip}(\phi)^2$$
      for all $\phi\in\Map(\Sigma)$.
    \end{fact}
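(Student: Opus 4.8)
The plan is to establish the two inequalities separately, in both cases by passing through the optimal representatives and exploiting the relationship between the Lipschitz constant, its inverse, and the quasi-conformal distortion of a diffeomorphism at a point. Recall that for an orientation-preserving diffeomorphism $f$ of a Riemannian surface, at every point $p$ the differential $d_pf$ has two singular values $0<\mu_1(p)\le\mu_2(p)$, and by definition $\Lip(f)=\sup_p\mu_2(p)$, while the quasi-conformal distortion is $K(f)=\sup_p\mu_2(p)/\mu_1(p)$. The first, easy, inequality $\rho_K(\phi)\le C\cdot\rho_{\Lip}(\phi)^2$ comes from the observation that for any $f$ representing $\phi$ one also has $\Lip(f^{-1})=\sup_p 1/\mu_1(p)$, so $K(f)\le\Lip(f)\cdot\Lip(f^{-1})$; taking $f$ to be a near-optimal Lipschitz representative of $\phi$ and noting that $\Lip(f^{-1})$ is controlled by $\Lip(f)$ up to a constant depending only on the geometry of $\Sigma$ (the area is fixed, so the product $\mu_1\mu_2$ of singular values is pinched between two positive constants away from the thin parts, and one handles the cusps separately) yields $\rho_K(\phi)\le K(f)\le C\cdot\Lip(f)^2\le C'\cdot\rho_{\Lip}(\phi)^2$.

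For the reverse inequality $\frac1C\rho_{\Lip}(\phi)^2\le\rho_K(\phi)$, I would take a near-optimal quasi-conformal representative $f$ of $\phi$, so $K(f)$ is close to $\rho_K(\phi)$, and then improve it to a representative whose Lipschitz constant is controlled by $\sqrt{K(f)}$. The clean way to do this is via Teichm\"uller theory: the Teichm\"uller distance $d_{\Teich}(X,\phi\cdot X)$ equals $\frac12\log\rho_K(\phi)$ for the hyperbolic structure $X$ on $\Sigma$ (this is essentially the definition of $\rho_K$ once one knows the extremal quasi-conformal map is realized), while the Thurston asymmetric metric satisfies $d_{\Thu}(X,\phi\cdot X)=\log\rho_{\Lip}^{\min}(\phi)$ where $\rho_{\Lip}^{\min}$ is the infimum of Lipschitz constants of maps homotopic to $\phi$ that are moreover $C^1$; and these two metrics are bi-Lipschitz-comparable on the thick part and, with the stated powers, comparable in general by a theorem of Choi--Rafi--Series type (or by a direct argument bounding $d_{\Thu}\le 2 d_{\Teich}$ always, and $d_{\Teich}\le d_{\Thu}+\const$ on the thick part, with the cusped and thin parts treated by explicit computation). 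Converting back: $\log\rho_{\Lip}(\phi)\le d_{\Thu}(X,\phi X)+\const\le 2d_{\Teich}(X,\phi X)+\const=\log\rho_K(\phi)+\const$, which is the assertion after squaring and absorbing constants.

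The main obstacle is making the constants genuinely uniform when $\Sigma$ has punctures, since then $\Teich(\Sigma)$ is non-compact and the quotient by $\Map(\Sigma)$ is non-compact as well, so one cannot simply invoke compactness of moduli space to get a uniform bi-Lipschitz comparison of the two metrics. The resolution is that the comparison we need is between distances of the form $d(X,\phi X)$ for a \emph{fixed} basepoint $X$: near the cusps the extremal maps and the optimal Lipschitz maps can both be taken to be the obvious "affine on the horocyclic coordinate" maps, for which $K$ and $\Lip^2$ are literally comparable with an absolute constant, and one interpolates. Since this is classical and somewhat orthogonal to the current-theoretic heart of the paper, I would state it as a known fact and, as the text promises, relegate the proof to a remark after the theorem, citing the standard references on the Thurston metric and extremal quasi-conformal maps rather than reproducing the interpolation argument in full.
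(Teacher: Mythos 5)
Your overall strategy --- translating the statement into a comparison between the Teichm\"uller distance $d_{\Teich}(X,\phi X)=\tfrac12\log\rho_K(\phi)$ and the Thurston distance $d_{\Thu}(X,\phi X)=\log\rho_{\Lip}(\phi)$ and invoking a Choi--Rafi type comparison on the thick part --- is exactly one of the two routes the paper takes (it cites Theorem B of Choi--Rafi outright, and separately sketches a direct argument in the closed case: $\rho_K(\phi)=L_g^2$ where $L_g$ is the optimal Lipschitz constant for the flat metric of the Teichm\"uller differential, and the flat and hyperbolic metrics are uniformly bilipschitz). However, both of your two concrete arguments have gaps. For the inequality $\rho_K\le C\rho_{\Lip}^2$ you take a near-optimal Lipschitz representative $f$, write $K(f)\le\Lip(f)\cdot\Lip(f^{-1})$, and claim $\Lip(f^{-1})\le C\Lip(f)$ because ``the product $\mu_1\mu_2$ is pinched since the area is fixed.'' That pinching is false: a diffeomorphism of a fixed surface preserves total area, not the pointwise Jacobian, which can be arbitrarily small somewhere (and nothing about near-optimality of $\Lip(f)$ prevents this --- Thurston's stretch maps, which realize the optimal Lipschitz constant, have inverses with no comparable Lipschitz control). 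Even the true statement $\rho_{\Lip}(\phi^{-1})\le C\rho_{\Lip}(\phi)$ (valid here because $X$ and $\phi X$ lie in a fixed thick part) only produces \emph{some} good representative of $\phi^{-1}$, not the inverse of \emph{your} $f$, so the product bound $\Lip(f)\Lip(f^{-1})$ does not close.

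For the reverse inequality there is an exponent error: your displayed chain uses the always-valid Wolpert bound $d_{\Thu}\le 2d_{\Teich}$, which yields $\log\rho_{\Lip}\le\log\rho_K+\const$, i.e.\ $\rho_{\Lip}\le C\rho_K$; ``squaring'' this gives $\rho_{\Lip}^2\le C^2\rho_K^2$, not $\rho_{\Lip}^2\le C\rho_K$. What you need is the \emph{additive} comparison $d_{\Thu}\le d_{\Teich}+\const$ on the thick part (coefficient $1$, not $2$), which is precisely the content of Choi--Rafi's Theorem B and does not follow from the general inequality. The good news is that the two-sided additive bound $\vert d_{\Teich}-d_{\Thu}\vert\le C(\epsilon)$ on the $\epsilon$-thick part gives \emph{both} inequalities of the Fact at once (note $X$ and $\phi X$ are the same point of moduli space, hence uniformly thick), so the correct fix is to discard the singular-value argument entirely and run the metric comparison in both directions with the right coefficient --- which is what the paper does, either by citation or via the flat-metric Lipschitz constant $\sqrt{K}$ of the Teichm\"uller map.
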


    We have proved Theorem \ref{thm-distances}.
  \end{proof}

  We comment now on the proofs of the two facts used in the proof
  above. By properties (7) and (8) of the space of currents, we have that
  for any other filling multicurve $\sigma'$ there is a constant 
  $C_1=C_1(\Sigma,\sigma,\sigma')$ with  
  \begin{equation} \label{LenInt}
    \frac{1}{C_1} \iota(\sigma,\phi(\sigma)) \le \ell_\Sigma(\phi(\sigma'))
    \le C_1 \iota(\sigma,\phi(\sigma)),
  \end{equation}
  where $\ell_\Sigma(\cdot)$ is the hyperbolic length function.
  Choosing $\sigma'$ to be a \emph{short marking} in the sense of
  \cite{LRT}, we get from Theorem 4.1 in that paper that there is a constant
  $C_2 = C_2(\Sigma,\sigma')$ such that 
  \begin{equation} \label{LipLen}
    \frac{1}{C_2} \ell_\Sigma(\sigma') \le \rho_{\Lip}(\phi) \le C_2
    \ell_\Sigma(\sigma') 
  \end{equation}
  Fact \ref{fact1} follows, with $C=C_1 \cdot C_2$, from these two
  inequalities.
 
  A similar argument, replacing results from \cite{LRT} by results from
  \cite{Rafi}, yields Fact 2. Alternatively one can directly refer to
  Theorem B in \cite{Choi-Rafi}.

  For the reader who feels cheated by a proof which only consists of a
  sequence of references, we sketch a more direct proof of Fact 1 and Fact
  2. Suppose $\Sigma$ is closed. By the Arzel\'a-Ascoli theorem, there is a
  Lipschitz map $f$ on $\Sigma$ representing $\phi$ with $L_f =
  \rho_{\Lip}(\phi)$. By Teichm\"uller's theorem, there is a unit-area
  quadratic differential $q$ on $\Sigma$ and a map $g$ representing $\phi$,
  such that $\rho_K(\phi) = L_g^2$, where $L_g$ is the Lipschitz constant
  of $g$ with respect to the singular Euclidean metric induced by $q$.
  Moreover, $L_g$ is the minimal Lipschitz constant of all maps on $q$
  representing $\phi$. By compactness of $\Sigma$, the $q$--metric and the
  hyperbolic metric on $\Sigma$ are bilipschitz equivalent. By compactness
  of the space of unit-area quadratic differentials, this bilipschitz
  equivalence is uniform. Therefore, there is a constant $B$ depending only
  on $\Sigma$ such that $$\frac {1}{B} L_f \le L_g \le B L_f.$$ This
  obtains Fact \ref{fact2} with $C=B^2$.
  
  Let $\sigma$ be a filling multicurve which we realize by a $q$--geodesic.
  Because $\sigma$ is filling, it cannot be entirely $q$--vertical.
  Compactness of the space of unit-area quadratic differentials implies
  that in fact the horizontal length of $\sigma$ is a definite proportion
  of its total length. Under the map $g$, the $q$--horizontal direction
  gets stretched by the factor $L_g$, so the $q$--length of $\phi(\sigma)$
  grows proportionally to $L_g$. By comparing to the hyperbolic metric and
  using compactness of $\Sigma$ again, we get Equation (\ref{LipLen}) with
  $\sigma=\sigma'$. We still have (\ref{LenInt}) (with $\sigma=\sigma')$.
  This shows Fact \ref{fact1}.

  For the general case, losing compactness of $\Sigma$ means losing
  bilipschitz equivalence between the $q$--metric and the hyperbolic
  metric. However, the argument we just sketched can be modified to take
  care of this issue and we refer to the above listed references for the
  details. 
   

  \end{document}